%% file: DTGCE_Ver10.tex
\documentclass[10pt,twocolumn,twoside]{IEEEtran}

\usepackage{amsmath,amssymb,amsthm}
\usepackage{mathrsfs}
\usepackage{MnSymbol}

\usepackage{graphicx}
\usepackage{float}
\usepackage{caption}
\usepackage{subcaption}

\usepackage[linesnumbered,ruled]{algorithm2e}

\usepackage{cite}
\usepackage{enumitem}

\usepackage{tikz}
\usetikzlibrary{
    arrows,
    shapes,
    calc,
    intersections,
    through,
    backgrounds,
    positioning
}

\usepackage{pgfplots}
\pgfplotsset{compat=1.16}
\usepackage{pgfplotstable}

\usepackage{textgreek}
\usepackage{textcomp}

\usepackage{hyperref}
\hypersetup{
    colorlinks=true,
    linkcolor=blue,
    citecolor=blue,
    urlcolor=blue,
    filecolor=blue
}

\theoremstyle{plain}
\newtheorem{theorem}{Theorem}
\newtheorem{lemma}{Lemma}
\newtheorem{corollary}{Corollary}

\theoremstyle{definition}
\newtheorem{definition}{Definition}
\newtheorem{assumption}{Assumption}
\newtheorem{remark}{Remark}
\newtheorem{example}{Example}

\definecolor{myblue}{RGB}{0,114,178}
\definecolor{mygreen}{RGB}{0,158,115}

\def\BibTeX{{\rm B\kern-.05em{\sc i\kern-.025em b}\kern-.08em
    T\kern-.1667em\lower.7ex\hbox{E}\kern-.125emX}}

\begin{document}

\title{Output Feedback Guaranteed Cost Equilibrium in Networked Multi-Agent Linear-Quadratic Difference Games}
\author{Aniruddha Roy and Puduru Viswanadha Reddy, \IEEEmembership{Member, IEEE} \thanks{A. Roy is with the Robert Bosch Centre for Cyber-Physical Systems, Indian Institute of Science, Bengaluru, Karnataka, 560012, India. A. Roy's research work is supported by the Anusandhan National Research Foundation (ANRF) under the National Post-Doctoral Fellowship (NPDF) scheme, under a research grant PDF/2025/003882. P. V. Reddy is with the Department of Electrical Engineering, Indian Institute of Technology-Madras, Chennai, 600036, India.
		{(e-mail: aniruddharoy@iisc.ac.in, vishwa@ee.iitm.ac.in)}}}

\maketitle
\begin{abstract}
In this paper, we study infinite-horizon deterministic linear-quadratic difference games with an output feedback information structure. We consider linear time-invariant dynamics and quadratic cost functionals defined over an infinite horizon. We first demonstrate that computing an output feedback Nash equilibrium (OF-NE) in difference games is challenging, even for low-dimensional games. To address this difficulty, we introduce an output feedback guaranteed cost equilibrium (OF-GCE) for difference games. In an OF-GCE, each player seeks a feedback strategy that guarantees its cost remains below a prescribed bound while satisfying an equilibrium condition. We derive necessary and sufficient conditions for the existence of an OF-GCE in terms of the solvability of a set of coupled bilinear matrix inequalities. We provide a linear matrix inequality-based iterative algorithm for synthesizing OF-GCE strategies. We further show that the state feedback GCE (SF-GCE) is a special case of the OF-GCE when players have complete state information. Numerical examples illustrate the effectiveness of the proposed approach.
\end{abstract}
\begin{IEEEkeywords} 
Difference games, Guaranteed cost equilibrium, Networked control, Linear matrix inequality 
\end{IEEEkeywords}
\section{Introduction} 
Game theory provides a mathematical framework for analyzing interactions among multiple decision-makers (also referred to as players or agents) \cite{Myerson:97}. Dynamic game theory (DGT) provides a modeling framework for analyzing interactions among players that evolve over time \cite{Basar:99}. In engineering applications, DGT has been widely used in areas such as cyber-physical systems \cite{Zhu:2015cps}, communication networks \cite{Zazo:16}, autonomous vehicles \cite{Fisac:2019:auto_veh_dyngame}, military and defense \cite{Isaacs:65}, and smart grids \cite{Saad:2012smartgrid}. In DGT, players' interactions are captured by a state vector which is governed by differential or difference equations, referred to as differential (continuous-time) and difference (discrete-time) games, respectively. In large-scale networked systems, agents depend on the local state (or output) information. Hence, it is impractical to use centralized control architectures that require complete state information. While most of the existing literature focuses on continuous-time games with complete state information, real-world systems typically operate in discrete time under local state information. These issues motivate us to develop an output feedback information-based control framework for discrete-time games.

This paper studies linear-quadratic (LQ) difference games with an output feedback information structure. We consider a class of difference games with linear time-invariant state dynamics and quadratic cost functionals defined over an infinite horizon. When a state feedback information structure is assumed, it is well known that the existence of a state feedback Nash equilibrium (SF-NE) for LQ difference games is related to the solvability of a set of coupled algebraic equations; see \cite[Theorem 3.2]{Monti:2024feedbackNash_discrete}. However, for the output information structure, the existence of an output feedback Nash equilibrium (OF-NE) for LQ difference games has received less attention. The work in \cite{Engwerda:08} studied the existence of OF-NE for LQ differential games, but solvability issues remain there due to the additional structural constraints; see \cite[Remarks 2.2 and 2.3]{Engwerda:08}. Building on the existing result for SF-NE in LQ difference games given in \cite[Theorem 3.2]{Monti:2024feedbackNash_discrete}, we extend the analysis to the case where players use output feedback information structure. In this setting, computing an OF-NE is also related to the solvability of a set of coupled algebraic equations. Unlike the state feedback case, the solution(s) of coupled algebraic equations must satisfy the additional structural constraint along with closed-loop system stability. We show that these conditions are often too restrictive to be met, even in low-dimensional games. To address this difficulty, we develop the notion of output feedback guaranteed cost equilibrium (OF-GCE). 

\subsubsection*{Contributions} 
This work aims to study an OF-GCE concept for infinite-horizon deterministic LQ difference games. We derive necessary and sufficient conditions for its existence. We also provide a computational method for synthesizing OF-GCE strategies. The main contributions of this work are summarized as follows: 
\begin{enumerate}
 \item We introduce the notion of an OF-GCE for LQ difference games. This concept is a discrete-time counterpart to our earlier work on feedback GCE for continuous-time LQ games; see \cite{Roy:22, Roy:2025guaranteed}. 
\item In Theorem \ref{thm:GCEtest_output_iff}, we provide necessary and sufficient conditions for a strategy profile to be a stabilizing OF-GCE. In particular, the conditions \eqref{eq:GCEoutput1}–\eqref{eq:GCEoutput2} can be directly used to check whether a given strategy profile is an OF-GCE. Moreover, to design each player’s feedback strategy given the strategies of the other players, Theorem \ref{thm:GCi_OF} provides necessary and sufficient conditions for the existence of such a feedback strategy.

\item We show that the computation of an OF-GCE is related to the feasibility of a non-convex set defined in \eqref{eq:setPi}. To address this non-convexity, we use semi-definite programming (SDP) relaxations to obtain convex approximations of the set \eqref{eq:setPi}. In addition, we provide a linear matrix inequality (LMI)-based iterative Algorithm \ref{alg:1_GCE_OF} for computing an OF-GCE. 
\item Our current approach unifies both the state and output feedback cases. In particular, we show that the state feedback guaranteed cost equilibrium (SF-GCE) arises as a special case of the OF-GCE framework. Corollary \ref{cor:FGi_SF} provides the synthesis of each player’s feedback strategy in the state feedback setting. Moreover, the associated feasibility set \eqref{eq:setPi_SF} is convex, and therefore no SDP relaxation is required for computation of an SF-GCE. 
 \end{enumerate}

\subsubsection*{Novelty and Differences with Existing Literature} 
The equilibrium concept studied in this paper is inspired by the satisfaction equilibrium for static games \cite{Ross:06}. For discrete-time settings, approximate SF-NE for LQ difference games were studied in \cite{Nortmann:2023approximateNash_discrete}, with data-driven computation methods developed in \cite{Nortmann:2024feedbackNash_discrete}. In \cite{Monti:2024feedbackNash_discrete}, the authors studied open-loop and feedback Nash equilibria in infinite-horizon LQ difference games. In contrast, we introduce a broader class of equilibrium strategies that guarantee individual costs remain below prescribed thresholds while retaining equilibrium properties. This work is a counterpart of our continuous-time GCE framework \cite{Roy:22, Roy:2025guaranteed} in discrete-time. While  \cite{Roy:22, Roy:2025guaranteed} provides only a sufficient condition for the existence of an OF-GCE in LQ differential games, here we derive the necessary and sufficient conditions for the existence of an OF-GCE. Additionally, we derive that an SF-GCE as a special case of OF-GCE. To the best of our knowledge, this is the first work on infinite-horizon LQ difference game under output feedback that ensures both equilibrium and cost guarantees.

The remainder of this paper is structured as follows. Section \ref{sec:preliminaries} presents preliminaries and problem formulation for discrete-time LQ games with output feedback information structure. Section \ref{sec:suboptimalLQ} provides fundamental results for the single-agent discrete-time suboptimal control problem. Building on these results, Section \ref{sec:GCE_concept_properties} revisits guaranteed cost responses and feedback guaranteed cost equilibria, and summarizes their key properties . Section \ref{sec:OF_GCE} presents existence and synthesis results for an OF-GCE. Section \ref{sec:SF_GCE} presents existence and synthesis results for an SF-GCE when players have complete state information. In particular, the SF-GCE results follow as a special case of the OF-GCE framework. Section \ref{sec:numerical_results} demonstrates our method through numerical examples. Finally, Section \ref{sec:concl} provides concluding remarks and future research directions.

\subsubsection*{Notation} 
$\mathbb{R}^n$ ($\mathbb{R}^n_+$) denotes the set of $n\times 1$ real (positive) column vectors, $\mathbb{R}^{n\times m}$ denotes the set of $n\times m$ real matrices. $E^\top$ denotes the transpose of a matrix or a vector $E$. ${I}_n$ denotes the $n\times n$ identity matrix. ${0}_{n\times m}$ denotes an $n\times m$ matrix with all its elements as zero. An $(n-1)$ tuple, $(1,2,\cdots,i-1,i+1,\cdots,n)$ is denoted by $-i$. An $n$-tuple, $(S_1,S_2,\cdots,S_n)$ is also denoted by $(S^i,S^{-i})$ where  $S^{-i} :=(S^1,\cdots,S^{i-1},S^{i+1},\cdots,S^n)$. A symmetric matrix $M\in\mathbb{R}^{n\times n}$ is positive definite (respectively, positive semi-definite) and denoted by $M\succ 0$ 
(respectively, $M\succeq 0$) if and only if $x_0^\top M x_0 > 0,~\forall\, x_0\in\mathbb{R}^n \setminus\{0\}$ (respectively, $x_0^\top M x_0 \ge 0,~\forall x_0\in\mathbb{R}^n$). The direct sum and Kronecker product of two matrices $A$ and $B$ are denoted by $A \oplus B$ and $A\bigotimes B$ respectively. $||A||$ denotes a matrix norm of the matrix $A$. $S\bigtimes T$ denotes the Cartesian product of the sets $S$ and $T$.   $\partial S$ denotes the boundary of a set $S$. A matrix $A \in \mathbb{R}^{n \times n}$ is called Schur stable if all its eigen values satisfy $\sigma(A) \subset \mathbb{D}$, where $\mathbb{D} := \{\lambda \in \mathbb{C} \mid |\lambda| < 1\}$. $A(i,j)$ denotes the element in the $i$-th row and $j$-th column of the matrix $A$. $\ker(A)$ denotes the kernel (null space) of $A$, i.e., $\ker(A) := \{ x \in \mathbb{R}^n \mid Ax = 0 \}$. Set $\mathsf{K} := \{0,1,2,\ldots\}$ denotes the set of non-negative integers.
\section{Preliminaries and problem formulation}
\label{sec:preliminaries}
In this section, we first review the preliminaries of an infinite-horizon nonzero-sum linear-quadratic difference game and subsequently present the problem formulation.  
\subsection{Preliminaries}
In this section, we consider an infinite-horizon nonzero-sum linear-quadratic difference game with partial state observations. The set of players (or agents) is denoted by $\mathsf{N}:=\{1,2,\ldots,N\}$. We assume that, at each time $k \in \mathsf{K}$, each player $i \in \mathsf{N}$, using the control $u_k^i\in \mathbb{R}^{m_{i}}$, influences the evolution of the state vector $x_k \in \mathbb{R}^n$ according to the following linear time-invariant dynamics in discrete time
\begin{subequations} 
\label{eq:LQDG} 
	\begin{align}
		& x_{k+1}= A x_k + \sum_{i \in \mathsf{N}} B^i u_k^i, 
        \label{eq:state_dynamics}
	\end{align}
with a given initial condition $x_0 \in \mathbb{R}^n$, where $A\in \mathbb R^{n\times n}$, $B^i\in \mathbb R^{n\times m_i}$. Player $i \in \mathsf{N}$ has partial state information, that is, access to the output vector $y_k^i\in \mathbb R^{s_i}$, given by 
	\begin{align} 
		& y_k^i = C^i x_k, ~ k \in \mathsf{K}, 
		\label{eq:output}
	\end{align}
    where $C^i \in \mathbb R^{s_i\times n}$ with $\text{rank}(C^i) = s_i$, and $s_i\leq n$. Each player $i$ seeks a control strategy $u_k^i$ that minimizes the quadratic cost functional
	\begin{align}
		J^i \left(u^i,u^{-i} \right) :=   \sum_{k=0}^{\infty} \left( {y_k^i}^\top Q^i y_k^i + {u_k^i}^\top R^i u_k^i \right),
\label{eq:objective_finitehorizon_output}
	\end{align}
	where $Q^i \in \mathbb R^{s_i\times s_i}$, $Q^i \succeq 0$, and $R^i \in \mathbb R^{m_i\times m_i}$, $R^i \succ 0$.
It is well established that the outcome of a noncooperative dynamic game depends on the information available to the players during the decision-making process, commonly referred to as the information structure; see \cite{Basar:99}. 
\end{subequations} 
\begin{assumption}
\label{asum:OFB_inforStruct}
The game \eqref{eq:LQDG} is played under an output feedback information structure, and each player $i \in \mathsf{N}$ uses a linear static output feedback control of the form
\begin{equation}
u_k^i = F^i y_k^i,~
F^i \in \mathbb{R}^{m_i \times s_i},~
k \in \mathsf{K}.
\end{equation}
\end{assumption}
Using Assumption \ref{asum:OFB_inforStruct}, the cost in \eqref{eq:objective_finitehorizon_output} rewritten as 
\begin{align}
		J^i \left(F^i ,F^{-i} \right)= \sum_{k=0}^{\infty} {x_k}^\top \left( {C^i}^\top Q^i C^i + {C^i}^\top {F^i}^\top R^i F^i C^i \right)x_k, \label{eq:playerfbcost}
	\end{align}
where $x_k$ evolves according to the closed-loop dynamics  $x_{k+1} = (A+\sum_{i\in \mathsf N} B^i F^i C^i)x_k,~k \in \mathsf{K}$, with a given initial condition $x_0 \in \mathbb{R}^n$. For notational simplicity, $J^i$ denotes both the original cost functional in \eqref{eq:objective_finitehorizon_output} and its reformulation  \eqref{eq:playerfbcost}.

\begin{assumption}
\label{asum:OFB_stabilizability}
The system \eqref{eq:state_dynamics} is output feedback stabilizable. That is, the following set of feedback strategies 
\begin{equation}
\mathscr{F}
:=
\Big\{
(F^i,F^{-i}) \in \bigtimes_{i\in\mathsf{N}} \mathbb{R}^{m_i\times s_i}
~\mid~
\sigma\Big(
A+\sum_{i\in\mathsf{N}} B^i F^i C^i
\Big)
\subset \mathbb{D}
\Big\},
\end{equation}
is non-empty. 
\end{assumption}
Due to the restriction imposed by Assumption \ref{asum:OFB_stabilizability}, the strategy spaces of the players become interdependent. However, this condition ensures Schur stability of the closed-loop system. As a result, the individual cost functionals are finite; see \cite{Engwerda:05}. The most widely studied solution concept in non-cooperative game theory is the Nash equilibrium. For non-cooperative LQ dynamic game \eqref{eq:LQDG} under an output feedback information structure in Assumption \ref{asum:OFB_inforStruct}, the output feedback Nash equilibrium is defined as follows.
%
\begin{definition}
An $N$-tuple $\left(F^{i\,\star},F^{-i\,\star}\right)\in \mathscr F$
is called an output feedback Nash equilibrium (OF-NE) if for each
player $i\in \mathsf N$, the following inequality holds:
\begin{align}
J^i\left(F^{i\,\star},F^{-i\,\star}\right)
\leq
J^i\left(F^i,F^{-i\,\star}\right),
\label{eq:NE}
\end{align}
for all $F^i$ such that
$\left(F^i,F^{-i\,\star}\right)\in \mathscr F$.
\end{definition}
Following studies in \cite{Nortmann:2023approximateNash_discrete, Nortmann:2024feedbackNash_discrete, Monti:2024feedbackNash_discrete}, which assume that each player $i \in \mathsf{N}$ uses a state feedback strategy $u_k^i = F^i x_k$, $k \in \mathsf{K}$, and we extend the analysis to an output feedback strategy $u_k^i = F^i y_k^i$, $k \in \mathsf{K}$ (refer to Assumption \ref{asum:OFB_inforStruct}). In this setting, the computation of an OF-NE for infinite-horizon LQ difference games is characterized by the joint solvability of the discrete-time coupled algebraic Riccati equations (DCARE) in \eqref{eq:DCARE},
\begin{subequations}
\begin{align}
\text{DCARE:}~& \Bigl(A + \sum_{j \in \mathsf{N}} B^j F^j C^j \Bigr)^\top P^i \Bigl(A + \sum_{j \in \mathsf{N}} B^j F^j C^j \Bigr)  - P^i \notag \\
&\qquad + {C^i}^\top Q^i C^i + {C^i}^\top {F^i}^\top R^i F^i C^i = 0, ~ i \in \mathsf{N}, \label{eq:DCARE}
\end{align}
and the equations in \eqref{eq:F_P_equation}, 
\begin{align}
M
\begin{bmatrix}
F^1 C^1 \\ F^2 C^2 \\ \vdots \\ F^N C^N
\end{bmatrix}
= -
\begin{bmatrix}
{B^1}^\top P^1 \\ {B^2}^\top P^2 \\ \vdots \\ {B^N}^\top P^N
\end{bmatrix} A,
\label{eq:F_P_equation}
\end{align}
\text{where}, 
\begin{align}
\resizebox{\columnwidth}{!}{$
M =
\begin{bmatrix}
R^1 + {B^1}^\top P^1 B^1 & {B^1}^\top P^1 B^2 & \cdots & {B^1}^\top P^1 B^N \\
{B^2}^\top P^2 B^1 & R^2 + {B^2}^\top P^2 B^2 & \cdots & {B^2}^\top P^2 B^N \\
\vdots & \vdots & \ddots & \vdots \\
{B^N}^\top P^N B^1 & {B^N}^\top P^N B^2 & \cdots &
R^N + {B^N}^\top P^N B^N
\end{bmatrix}.
$}
\label{eq:M_matrix}
\end{align}
\end{subequations}
From \eqref{eq:F_P_equation} and \eqref{eq:M_matrix}, we can write that  $ \bigl(R^i + {B^i}^\top P^i B^i \bigr) F^{i\,\star} C^i  + \sum_{j \neq i} {B^i}^\top P^i \left(B^jF^j C^j \right) = - {B^i}^\top P^i A$. From here, for each player $i$, we have 
\begin{align}
F^i C^i = -\bigl(R^i + {B^i}^\top P^i B^i \bigr)^{-1} {B^i}^\top P^i \bigl(A + \sum_{j \neq i} B^j F^j C^j),~i \in \mathsf{N}.  
\label{eq:OFNEstrategy}
\end{align} 
We define $A^{i} := \bigl(A + \sum_{j \neq i} B^j F^j C^j \bigr),~i \in \mathsf{N}$. Now, following \cite{Engwerda:08}, post-multiplying both sides of \eqref{eq:OFNEstrategy} by ${C^i}^\top$ gives $F^{i\,\star} C^i {C^i}^\top = - \bigl(R^i + {B^i}^\top P^i B^i\bigr)^{-1} {B^i}^\top P^i A^i {C^i}^\top$.  Since $\text{rank}(C^i) = s_i \leq n$, the matrix $C^i {C^i}^\top$ is invertible, which implies $F^{i\,\star} = - (R^i + {B^i}^\top P^i B^i )^{-1} {B^i}^\top P^i A^i {C^i}^\top (C^i {C^i}^\top )^{-1},~i \in \mathsf{N}$. For the exact solvability of $F^{i\,\star}$ in \eqref{eq:OFNEstrategy}, the solution from \eqref{eq:DCARE} and \eqref{eq:F_P_equation} must satisfy the following structural condition for each player $i \in \mathsf{N}$ 
\begin{align}
\label{eq:struc_condn}
\bigl(R^i + {B^i}^\top P^i B^i\bigr)^{-1} {B^i}^\top P^i A^i
\bigl(I_n - {C^i}^\top \bigl(C^i {C^i}^\top\bigr)^{-1} C^i\bigr) = 0.
\end{align}
We note that, since $R^i \succ 0$ and $P^i= {P^i}^\top \succeq 0$, the matrix $(R^i + {B^i}^\top P^i B^i)$ is invertible. Moreover, the matrix $I_{n} - {C^i}^\top (C^i{C^i}^\top )^{-1} C^i$ is the orthogonal projector onto $\ker(C^i)$. Therefore, the structural condition \eqref{eq:struc_condn}, implies that each row of ${B^i}^\top P^i A^i$, must belong to the row space of $C^i$ for $F^{i\,\star}$ to be exactly solvable.
\begin{remark}
\label{rem:DCARE_SFB_challenges}
Computing an SF-NE (when $C^i = I_n,~\forall i \in \mathsf{N}$) for LQ difference games is significantly more challenging than in the LQ differential games. In the LQ differential game setting, an SF-NE strategies are characterized by a set of $N$ numbers of coupled algebraic Riccati equations (CARE). The coupling with other players appears only through the CARE; see \cite[Equations 8.3.3 and 8.3.4]{Engwerda:05}. In this case, each player’s feedback gain depends only on their own solution of CARE. That is, once a set of solution $(P^i, P^{-i} )$ is computed, the corresponding strategies $(F^{i\,\star}, F^{-i\,\star})$ computed using $F^{i\,\star} = -({R^i})^{-1}{B^i}^\top P^i,~\forall i \in \mathsf{N}$; see \cite[Theorem 8.5]{Engwerda:05}.  In contrast, in the LQ difference games, an SF-NE is characterized  by a set of $2N$ coupled algebraic equations \eqref{eq:DCARE}-\eqref{eq:F_P_equation}.  In this case, the equilibrium strategy of each player directly depends on the strategies of all other players, as shown in \eqref{eq:OFNEstrategy}; see also \cite[Remark 1]{Nortmann:2023approximateNash_discrete} and  \cite[Remark 2]{Nortmann:2024feedbackNash_discrete}.  
\end{remark}
\begin{remark}
\label{rem:DCARE_variables}
In \eqref{eq:DCARE} and \eqref{eq:F_P_equation}, each player $i$ has two sets of unknowns: the feedback gain $F^i \in \mathbb{R}^{m_i \times s_i}$ and the symmetric positive definite matrix $P^i \in \mathbb{R}^{n \times n}$. Before any elimination, the total number of scalar decision variables is $\sum_{i=1}^{N} \left[ \frac{n(n+1)}{2} + m_i s_i \right]$. By eliminating $F^i$ using \eqref{eq:F_P_equation} and substituting it into  \eqref{eq:DCARE}, the problem reduces to a system of $N$ coupled algebraic equations involving only the unknown matrices $P^i$. The total number of scalar unknowns after this elimination becomes $N \cdot \frac{n(n+1)}{2}$ corresponding to all $P^i$ matrices. We further note that the resulting coupled equations are polynomial and, in general, non-quadratic in unknown variables. Unlike the Riccati equations encountered in continuous-time LQ games and in LQ optimal control (both continuous and discrete time), these equations are not quadratic in the decision variables; see also \cite[Remark 2]{Nortmann:2024feedbackNash_discrete}.  
\end{remark}
\begin{remark}
\label{rem:DCARE_OFB_challenges}
In addition to difficulties discussed in Remark \ref{rem:DCARE_SFB_challenges} and Remark \ref{rem:DCARE_variables}, computing an OF-NE is further complicated by the need to verification of the structural condition in \eqref{eq:struc_condn}. In the continuous-time case, this condition can be verified once the CARE solutions are obtained; see \cite[Equation 6]{Roy:2025guaranteed}. Nevertheless, computing an OF-NE  remains challenging even for LQ differential games as demonstrated in \cite{Roy:22, Roy:2025guaranteed}. In the discrete-time case, by contrast, the presence of the term $A^i$ in \eqref{eq:struc_condn} makes the verification of the structural condition more involved. As a result, computing an OF-NE becomes an even more difficult problem for LQ difference games. Note that, if players have complete state information (that is, $C^i = I_n$ for all $i \in \mathsf{N}$), then the structural condition \eqref{eq:struc_condn} is trivially satisfied, regardless of the solution of \eqref{eq:DCARE} and \eqref{eq:F_P_equation}. 
\end{remark}
The following example illustrates that the joint solvability of \eqref{eq:DCARE} and \eqref{eq:F_P_equation} may admit a solution. However, even after obtaining such a solution, verifying Schur stability of the closed-loop system and the structural condition in \eqref{eq:struc_condn} can be too restrictive. As a result, these conditions may fail to hold even for low-dimensional games.
\begin{example}
	\label{ex:OFNEex1}
    We consider a three-agent networked multi-agent system studied in \cite[Example~1]{Roy:22} and \cite[Example~2.3]{Roy:2025guaranteed}; see Fig.~\ref{fig:3agentnetwork1}. 
\begin{figure}[h]
    \centering
    \begin{tikzpicture}[scale=.25,>=latex', inner sep=1mm, font=\small]
        \tikzstyle{solid node}=[
            circle,
            auto=center,
            draw,
            minimum size=6pt,
            inner sep=2,
            fill=mygreen!20
        ]
        \tikzstyle{uedge}=[
            draw,
            black!65,
            -,
            line width=0.25mm
        ]

        \node (n1)[solid node] at (-7,0) {\scriptsize{$1$}};
        \node (n2)[solid node] at (0,0)  {\scriptsize{$2$}};
        \node (n3)[solid node] at (7,0)  {\scriptsize{$3$}};

        \path[uedge] (n1) -- (n2);
        \path[uedge] (n2) -- (n3);

    \end{tikzpicture}
    \caption{Undirected communication graph of the 3-agent multi-agent system in Example~\ref{ex:OFNEex1}.}
    \label{fig:3agentnetwork1}
\end{figure}
The continuous-time system in these references is discretized using a zero-order hold with a sampling time of $T_s=0.1$. The objective functions, output variables, and weighting matrices are identical to those in \cite{Roy:22, Roy:2025guaranteed}. For this reason, they are omitted for brevity. After eliminating the feedback gains $F^i$, $i=1,2,3$, the resulting DCARE in \eqref{eq:DCARE} consist of $18$ non-quadratic multivariate polynomial equations in $18$ variables. Moreover, imposing the structural condition \eqref{eq:struc_condn} requires the additional constraints $P^1(1,3)=0$ and $P^3(3,3)=0$. It can be verified that DCARE does not admit solutions that satisfy the structural constraint \eqref{eq:struc_condn}. As a result, the existence of an OF-NE for this discrete-time game is not clear. 
\end{example} 
The computation of an OF-NE for infinite-horizon LQ difference games is considerably more challenging than for LQ differential games \cite{Roy:22, Roy:2025guaranteed}; see also discussions in Remarks \ref{rem:DCARE_SFB_challenges}--\ref{rem:DCARE_OFB_challenges} and Example \ref{ex:OFNEex1}. We summarize the difficulties as follows: 
\begin{enumerate}
    \item In differential games, an OF-NE is characterized by a set of $N$ number of  CARE with variables $(P^i, P^{-i})$, and each player’s feedback gain depends only on their own solution. In contrast, for difference games it is characterized by a set of $2N$ coupled algebraic equations \eqref{eq:DCARE}-\eqref{eq:F_P_equation} which involves $(F^i, F^{-i}) $ and $(P^i, P^{-i})$ variables, where each player’s strategy explicitly depends on those of all other players; see Remark \ref{rem:DCARE_SFB_challenges}. 
    \item The coupled algebraic equations in LQ differential games are quadratic in the decision variables, whereas in LQ difference games they are non-quadratic; see Remark \ref{rem:DCARE_variables}.
   \item As in the differential game setting, once the coupled algebraic equations \eqref{eq:DCARE}-\eqref{eq:F_P_equation} are solved for difference games, we need to verify the Schur stability of the closed-loop systems. In other words, the stability requirement is not embedded while solving equations \eqref{eq:DCARE} and \eqref{eq:F_P_equation} and remains decoupled from the computation process; see also discussions in \cite{Engwerda:07, Roy:2025guaranteed}.

    \item Computing an OF-NE also requires verifying the structural condition \eqref{eq:struc_condn}. This verification is relatively straightforward in continuous time once the CARE solutions $(P^i, P^{-i})$ are obtained, but becomes significantly more involved in discrete time due to the presence of the additional term $A^i,~ i \in \mathsf{N}$; see Remark \ref{rem:DCARE_OFB_challenges}. 
\end{enumerate} 
\subsection{Problem statement}
The challenges outlined above for computing an OF-NE motivate the need to consider an alternative notion of equilibrium. To address this, we ask the following question. Can the recently studied guaranteed cost equilibria (GCE) concept, which form a larger class of strategies and include feedback Nash equilibrium when they exist, be extended from continuous-time to discrete-time LQ games in order to address the associated computational difficulties? To study an OF-GCE concept in discrete-time LQ games, we first analyze a single-player suboptimal control problem. Based on these results, we then develop an OF-GCE framework for discrete-time LQ games. In this formulation, instead of minimizing their costs, players seek output feedback strategies that guarantee their costs remain below prescribed thresholds.
\section{Suboptimal control for discrete-time linear systems}
\label{sec:suboptimalLQ}
In this section, we study the suboptimal linear-quadratic output feedback control problem for discrete-time linear systems. The results presented in this section provide a discrete-time analog of the continuous-time results presented in \cite[Section III]{Jiao:20TAC}. We consider the following discrete-time autonomous system
\begin{align}
    x_{k+1} = \bar{A} x_k, ~ k \in \mathsf{K},
    \label{eq:discrete_dynamics_1}
\end{align}
with a given initial condition $x[0] = x_0 \in \mathbb{R}^n$, where $\bar{A} \in \mathbb{R}^{n \times n}$. We assume the following quadratic performance index
\begin{align}
    J = \sum_{k=0}^{\infty} {x_k}^\top \bar{Q} x_k,
    \label{eq:performance_index_1}
\end{align}
where $\bar{Q} \in \mathbb{R}^{n \times n}$, $ \bar{Q} = \bar{Q}^\top \succeq 0 $. Our objective is to derive conditions under which the performance index in \eqref{eq:performance_index_1} for the system in \eqref{eq:discrete_dynamics_1} is upper bounded by a given threshold. To this end, we establish the following lemma.
\begin{lemma}
\label{lem:cost_discrete_lyap}
    Consider system \eqref{eq:discrete_dynamics_1} with the corresponding quadratic performance \eqref{eq:performance_index_1}. Assume $\bar{A}$ is Schur stable, that is, $\sigma(\bar{A}) \subset \mathbb{D}$. Then 
    \begin{enumerate}
    \item The cost \eqref{eq:performance_index_1} is finite, and it can be expressed as 
    \begin{align}
     J = x_0^\top Y x_0, 
    \end{align}
    where $ Y $ is the unique positive semi-definite solution to the Stein equation 
    \begin{align}
    \bar{A}^\top Y \bar{A} - Y + \bar{Q} = 0. 
    \end{align}
    %
    \item Alternatively, 
    \begin{align}
        J = \inf \left\{ x_0^\top P x_0 \ \middle| \ P \succ 0,~ \bar{A}^\top P \bar{A} - P + \bar{Q} \prec 0 \right\}.
    \end{align}
\end{enumerate}
\end{lemma}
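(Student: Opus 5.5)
For part 1, I will work directly with the series solution of the Stein equation. Since $\bar{A}$ is Schur stable, there are constants $c>0$ and $\rho\in(0,1)$ with $\|\bar{A}^k\|\le c\rho^k$. Hence the series
\begin{align*}
Y:=\sum_{k=0}^{\infty} (\bar{A}^\top)^k \bar{Q} \bar{A}^k
\end{align*}
converges absolutely. It is symmetric and positive semi-definite because each term is.

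Because $x_k=\bar{A}^k x_0$, we get $J=\sum_k x_0^\top (\bar{A}^\top)^k\bar{Q}\bar{A}^k x_0 = x_0^\top Y x_0$, which is finite. Shifting the index shows $\bar{A}^\top Y\bar{A} = Y-\bar{Q}$, so $Y$ solves the Stein equation.

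For uniqueness, suppose $Y_1,Y_2$ are two solutions and set $D:=Y_1-Y_2$. Then $D=\bar{A}^\top D\bar{A}$, and iterating gives $D=(\bar{A}^\top)^k D\bar{A}^k\to 0$. Hence $D=0$. Equivalently, the linear map $Y\mapsto Y-\bar{A}^\top Y\bar{A}$ is invertible because $\lambda_i\lambda_j\neq 1$ for all eigenvalues of $\bar{A}$.

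For part 2, I will first show that the infimum is at least $J$. Take any $P\succ0$ with $\bar{A}^\top P\bar{A}-P+\bar{Q}\prec0$ and put $W:=P-\bar{A}^\top P\bar{A}-\bar{Q}\succ 0$. Then $P$ solves the Stein equation with right-hand side $\bar{Q}+W$. By the series representation from part 1,
\begin{align*}
P=Y+\sum_{k\ge0}(\bar{A}^\top)^kW\bar{A}^k\succeq Y.
\end{align*}
This gives $x_0^\top P x_0\ge x_0^\top Y x_0 = J$. A more elementary route is to telescope $x_k^\top P x_k - x_{k+1}^\top P x_{k+1} > x_k^\top\bar{Q}x_k$ and use $x_k\to0$.

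Next I will show that the infimum is at most $J$. For $\varepsilon>0$, let $P_\varepsilon$ be the solution of $\bar{A}^\top P_\varepsilon\bar{A}-P_\varepsilon+\bar{Q}+\varepsilon I=0$, namely $P_\varepsilon = Y+\varepsilon Z$ with $Z:=\sum_k(\bar{A}^\top)^k\bar{A}^k\succeq I\succ0$. Then $P_\varepsilon\succ0$ and $\bar{A}^\top P_\varepsilon\bar{A}-P_\varepsilon+\bar{Q}=-\varepsilon I\prec0$, so $P_\varepsilon$ is feasible. Since $x_0^\top P_\varepsilon x_0 = J+\varepsilon x_0^\top Zx_0\to J$, the infimum equals $J$. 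The infimum is generally not attained, because the strict inequality excludes $Y$ itself and $Y$ may be only semi-definite.

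The main point needing care is this approximation step. It must produce matrices that are simultaneously strictly positive definite and strictly feasible, and the choice $\bar{Q}+\varepsilon I$ in the Stein equation handles both at once via $Z\succeq I$. Everything else follows from the convergence of the Stein series under Schur stability.
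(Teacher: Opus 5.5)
Your proof is correct and essentially matches the paper's intended argument. The paper omits the proof as the discrete-time counterpart of \cite[Lemma 3]{Jiao:20TAC}, and your route is the discrete-time version of that standard argument: the series/Stein representation of $Y$, the comparison $P \succeq Y$ for every feasible $P$, and the approximants $P_\varepsilon$ solving the Stein equation with $\bar{Q}+\varepsilon I$. As a minor sharpening, your comparison actually gives $P - Y \succeq W \succ 0$, so for $x_0 \neq 0$ every feasible $P$ yields $x_0^\top P x_0 > J$; this is the precise reason the infimum is not attained.
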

\begin{proof}
The proof is a direct discrete-time counterpart of the continuous-time result in \cite[Lemma 3]{Jiao:20TAC}. Therefore, it is omitted. 
\end{proof}
In what follows, we present the necessary and sufficient conditions under which the cost $\eqref{eq:performance_index_1}$ satisfies $J < \delta$ for a given $\delta > 0$. This result is the discrete-time counterpart of the continuous-time result in \cite[Theorem 4]{Jiao:20TAC}, which is presented below. 
\begin{theorem}
\label{thm:subopt_DT_auto}
Consider system \eqref{eq:discrete_dynamics_1} with associated quadratic performance \eqref{eq:performance_index_1}.  
For a given $\delta > 0$, the following are equivalent
\begin{enumerate}
    \item $\sigma(\bar{A}) \subset \mathbb{D}$ and $J < \delta$. 
    \item There exists a matrix $P \succ 0$ such that
    \begin{subequations}
     \label{eq:LMI_P}
         \begin{align}
         & \bar{A}^\top P \bar{A} - P + \bar{Q} \prec 0,  \label{eq:LyaP_ineq_SOP}\\
         & x_0^\top P x_0 < \delta.  \label{eq:cost_bound_SOP}   
    \end{align}
    \end{subequations}
\end{enumerate}
\end{theorem}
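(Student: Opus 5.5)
The plan is to prove the two implications separately, leaning on Lemma~\ref{lem:cost_discrete_lyap} for (1)$\Rightarrow$(2) and on a telescoping Lyapunov argument for (2)$\Rightarrow$(1).

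\emph{(1)$\Rightarrow$(2).} Assume $\sigma(\bar{A})\subset\mathbb{D}$ and $J<\delta$. Part 2 of Lemma~\ref{lem:cost_discrete_lyap} gives
\[
J=\inf\{x_0^\top P x_0 \mid P\succ 0,\ \bar{A}^\top P\bar{A}-P+\bar{Q}\prec 0\}.
\]
Since $J<\delta$, the definition of the infimum yields a feasible $P$ with $x_0^\top P x_0<\delta$, and this $P$ satisfies \eqref{eq:LyaP_ineq_SOP}--\eqref{eq:cost_bound_SOP}. If I did not want to rely on the lemma's infimum characterization, I would build $P$ directly. Let $Y$ be the Stein solution from part 1, and let $Z\succ 0$ solve $\bar{A}^\top Z\bar{A}-Z+I=0$; this $Z$ exists because $\bar{A}$ is Schur. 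Set $P=Y+\epsilon Z$. Then $\bar{A}^\top P\bar{A}-P+\bar{Q}=-\epsilon I\prec 0$ and $P\succ 0$, while $x_0^\top P x_0=J+\epsilon x_0^\top Z x_0<\delta$ for sufficiently small $\epsilon>0$.

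\emph{(2)$\Rightarrow$(1).} Given $P\succ 0$ satisfying \eqref{eq:LyaP_ineq_SOP}, the inequality $\bar{A}^\top P\bar{A}-P\prec -\bar{Q}\preceq 0$ holds. By the discrete Lyapunov theorem this gives Schur stability. Concretely, take an eigenpair $\bar{A}v=\lambda v$ with $v\neq 0$. Then $(|\lambda|^2-1)v^*Pv<0$, and because $v^*Pv>0$ we get $|\lambda|<1$. Lemma~\ref{lem:cost_discrete_lyap} then applies. Telescoping along trajectories,
\[
x_{k+1}^\top P x_{k+1}-x_k^\top P x_k\le -x_k^\top\bar{Q}x_k,
\]
with the inequality strict when $x_k\neq 0$. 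Summing from $k=0$ to $K$ and letting $K\to\infty$, we have $x_K\to 0$, so
\[
J\le x_0^\top P x_0-\lim_{K\to\infty} x_K^\top P x_K = x_0^\top P x_0<\delta.
\]
Alternatively, part 2 of the lemma gives $J\le x_0^\top P x_0$ immediately.

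The main subtlety is the strictness in the first direction. The infimum in the lemma may not be attained, since $Y$ itself only satisfies the Stein equation with equality and may be singular. This is exactly why the perturbation $P=Y+\epsilon Z$ is needed: it gives both strict feasibility and strict positive definiteness while preserving the strict bound $x_0^\top Px_0<\delta$.
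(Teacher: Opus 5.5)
Your proof is correct and follows the same route the paper indicates: both directions come from Lemma~\ref{lem:cost_discrete_lyap}, using the infimum characterization for (1)$\Rightarrow$(2) and the bound $J\le x_0^\top P x_0$ for (2)$\Rightarrow$(1). Your eigenvector argument for Schur stability in (2)$\Rightarrow$(1) supplies a step the paper leaves implicit, since the lemma presupposes $\sigma(\bar{A})\subset\mathbb{D}$, and your explicit choice $P=Y+\epsilon Z$ is a valid self-contained alternative to invoking the infimum.
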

\begin{proof}
The proof follows from Lemma \ref{lem:cost_discrete_lyap} and is omitted for brevity.
\end{proof}
Next, we consider the following discrete-time controlled linear time-invariant (LTI) system with output 
\begin{subequations}
\begin{align}
    x_{k+1} &= Ax_k + Bu_k, ~ k \in \mathsf{K}, 
    \label{eq:discrete_dynamics_2}  \\	
    y_k &= Cx_k,
\end{align}
\label{eq:controlledDynamics_OF_discrete}
\end{subequations}
with a given initial condition $x_0 \in \mathbb{R}^n$, where $A \in \mathbb{R}^{n \times n}, B \in \mathbb{R}^{n \times m}$. The quadratic cost functional is given by 
\begin{align}
    J(u) = \sum_{k=0}^\infty \left({y_k}^\top Q y_k + {u_k}^\top R u_k \right), 
    \label{eq:performance_index_2}
\end{align}
where $Q\in \mathbb R^{s\times s}$, $Q \succeq 0$, and $R\in \mathbb R^{m\times m}$, $R \succ 0$. 
We aim to obtain a static output feedback suboptimal control $u_k=Fy_k$ that ensures the associated cost
\begin{align}
	J(F):=  \sum_{k=0}^\infty {x_k}^\top \left(C^\top QC+C^\top F^\top RCF\right) x_k, \label{eq:Fcost}
\end{align}
is bounded above by a prescribed threshold $\delta > 0$, where $x_k$  evolves according to
\begin{align} 
	x_{k+1} = \left(A+BFC \right)x_k,~ k \in \mathsf{K}, \label{eq:linsystem}
\end{align}  
with a given initial condition $x_0 \in \mathbb{R}^n$. We assume that the system \eqref{eq:controlledDynamics_OF_discrete} is output feedback stabilizable, that is, there exists a gain $F \in \mathbb{R}^{m \times s}$ such that $\sigma(A+BFC) \subset \mathbb{D}$. The following result provides a necessary and sufficient condition for the existence of such a suboptimal static output feedback control, which is a direct consequence of Theorem \ref{thm:subopt_DT_auto}.  
\begin{theorem}
\label{thm:GCC_discrete_iff}
Consider the  system \eqref{eq:controlledDynamics_OF_discrete} with the associated cost functional \eqref{eq:performance_index_2}. Assume the control law $u_k = Fy_k$. Let $ \delta > 0 $ be a given upper bound. Then, the following statements are equivalent 
\begin{enumerate}
    \item $\sigma(A+BFC) \subset \mathbb{D} $ and $ J(F) < \delta$. 
    \item There exist $P \succ 0$  and $F \in \mathbb{R}^{m \times s}$such that
 \begin{subequations}
\begin{align}
&\resizebox{0.9\columnwidth}{!}{$
\left(A+BFC \right)^\top P \left(A+BFC \right)-P+C^\top QC+C^\top F^\top RFC \prec 0,
$}
\label{eq:lyaP_ineq_discrete}\\
&x_0^\top P x_0 < \delta.
\label{eq:cost_bound}
\end{align}
\end{subequations}
\end{enumerate}
\end{theorem}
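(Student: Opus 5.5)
The plan is to reduce Theorem \ref{thm:GCC_discrete_iff} to Theorem \ref{thm:subopt_DT_auto} by fixing the gain $F$ and treating the closed loop as an autonomous system. With $F$ fixed, set
\begin{align*}
\bar{A} := A + BFC, \qquad \bar{Q} := C^\top Q C + C^\top F^\top R F C .
\end{align*}
Since $Q \succeq 0$ and $R \succ 0$, we have $\bar{Q} = \bar{Q}^\top \succeq 0$, so the standing assumptions of \eqref{eq:discrete_dynamics_1}--\eqref{eq:performance_index_1} hold. Under $u_k = F y_k = FCx_k$, the trajectory of \eqref{eq:controlledDynamics_OF_discrete} coincides with that of \eqref{eq:linsystem}, which is \eqref{eq:discrete_dynamics_1} with this $\bar{A}$. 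Substituting $y_k = Cx_k$ and $u_k = FCx_k$ into \eqref{eq:performance_index_2} gives \eqref{eq:Fcost}, namely $J(F) = \sum_k x_k^\top \bar{Q} x_k$, which is exactly \eqref{eq:performance_index_1}. I will write this identification out explicitly, checking the term $u_k^\top R u_k = x_k^\top C^\top F^\top R F C x_k$. (The statement writes $C^\top F^\top R C F$ in \eqref{eq:Fcost}, which is presumably a typo for $C^\top F^\top R F C$.)

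For (1)$\Rightarrow$(2): if $\sigma(A+BFC)\subset\mathbb{D}$ and $J(F)<\delta$, then statement 1 of Theorem \ref{thm:subopt_DT_auto} holds for the pair $(\bar{A},\bar{Q})$. That theorem produces $P\succ 0$ with $\bar{A}^\top P\bar{A}-P+\bar{Q}\prec 0$ and $x_0^\top P x_0<\delta$. Expanding $\bar{A}$ and $\bar{Q}$ gives \eqref{eq:lyaP_ineq_discrete}--\eqref{eq:cost_bound} with the same $F$.

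For (2)$\Rightarrow$(1): given $P\succ 0$ and $F$ satisfying \eqref{eq:lyaP_ineq_discrete}--\eqref{eq:cost_bound}, define $\bar{A},\bar{Q}$ from this $F$. Then \eqref{eq:LMI_P} holds, and Theorem \ref{thm:subopt_DT_auto} gives $\sigma(\bar{A})\subset\mathbb{D}$ and $J<\delta$, i.e.\ $J(F)<\delta$.

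The only delicate point is quantifier bookkeeping. In statement 1, $F$ is the given gain of the assumed control law, while statement 2 reads ``there exist $P$ and $F$''. I will interpret the equivalence for a fixed $F$: the $F$ in (2) is the same gain as in (1). Equivalently, (1) holds for some $F$ if and only if (2) holds, and the argument above goes through pointwise in $F$. If one wants a self-contained check of the stability direction rather than citing Theorem \ref{thm:subopt_DT_auto}, it follows because \eqref{eq:lyaP_ineq_discrete} together with $\bar{Q}\succeq 0$ gives $\bar{A}^\top P\bar{A}-P\prec 0$ with $P\succ 0$. This is the discrete Lyapunov condition, so $\bar{A}$ is Schur. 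The cost bound then comes from telescoping $x_{k+1}^\top P x_{k+1} - x_k^\top P x_k < -x_k^\top \bar{Q} x_k$ and summing, which gives $J(F) \le x_0^\top P x_0 < \delta$.
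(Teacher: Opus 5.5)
Your proposal is correct and matches the paper's proof: the paper likewise fixes $F$, sets $\bar{A} := A+BFC$ and $\bar{Q} := C^\top QC + C^\top F^\top RFC$, and invokes Theorem~\ref{thm:subopt_DT_auto}. Your direct Lyapunov/telescoping check of the (2)$\Rightarrow$(1) direction is correct, and you are right that $C^\top F^\top RCF$ in \eqref{eq:Fcost} is a typo for $C^\top F^\top RFC$.
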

\begin{proof}
Using the control law $u_k= FC x_k$, the dynamics in \eqref{eq:discrete_dynamics_2} and the cost functional in \eqref{eq:performance_index_2} become to $x_{k+1} = \bar{A} x_k,~J(F) = \sum_{k=0}^{\infty} x_k^\top \bar{Q} x_k$, where $\bar{A} := A + B F C$ and $\bar{Q} := C^\top Q C + C^\top F^\top R F C$. Hence, the proof follows the same lines as that of Theorem \ref{thm:subopt_DT_auto}.
\end{proof}
\section{Guaranteed cost equilibrium in infinite horizon difference games}
\label{sec:GCE_concept_properties}
Based on the concept of suboptimal control, the notion of a guaranteed cost function (GCF) was introduced in our earlier work on differential games; see \cite{Roy:22, Roy:2025guaranteed, Roy:24}. This GCF concept led to the corresponding equilibrium notion. In the present work on difference games, the definition of the GCF, the corresponding equilibrium, and their properties remain unchanged. Therefore, we briefly review these concepts for clarity and completeness. Let the $N$-tuple $(\delta^i, \delta^{-i}) \in \mathbb{R}_+^N$ represent a cost profile, which is a design parameter. We recall the set-valued GCF of each player $i \in \mathsf{N}$ as follows
\begin{align}
\resizebox{0.98\columnwidth}{!}{$
\mathfrak{g}^i(\delta^i,F^{-i})
:=
\left\{
F^i\in\mathbb{R}^{m_i\times s_i}
~\middle|~
J^i(F^i,F^{-i})<\delta^i,\,
(F^i,F^{-i})\in\mathscr{F}
\right\}.
$}
\label{eq:satisfactionfunction}
\end{align} 
Here, $\mathfrak g^i(\delta^i,F^{-i})$ denotes the set of feedback strategies that guarantee player $i$'s cost does not exceed the specified bound $\delta_i$, given that the remaining players use their feedback strategies $F^{-i}$. Inspired by this idea, we recall the equilibrium concept in the following definition, as introduced in \cite{Roy:22, Roy:2025guaranteed}. 
\begin{definition}
\label{def:FGCE}
Let $ \left(\delta^i,\delta^{-i} \right)\in \mathbb R^N_+$ be a given cost profile. 
	The strategy profile $  F^\circ: =\left(F^{i \, \circ}, F^{-i \, \circ} \right){~\in \mathscr F}$ is an \emph{output feedback guaranteed cost equilibrium} (OF-GCE) if for each player $i \in \mathsf{N}$ the following condition holds true 
	\begin{subequations}
		\begin{align}
			F^{i \, \circ}\in \mathfrak g^i (\delta^i, F^{-i \, \circ} ). \label{eq:fixedpoint}
		\end{align} 
		The set of all OF-GCE is given by
		\begin{align}
			\mathscr{F}^\circ:=\{ (F^i,F^{-i} )\in {\mathscr F} ~|~F^i\in \mathfrak g^i (\delta^i,F^{-i \, \circ}),~ \forall i\in \mathsf N \}.
		\end{align}
	\end{subequations}
\end{definition}
\begin{remark}
As discussed in our earlier work \cite{Roy:22, Roy:2025guaranteed}, we note that once the players reach an OF-GCE, no player has an incentive to deviate from their current strategy, as each player has achieved the desired upper bound on their cost. In other words, players are not interested in changing their strategies once they are satisfied. Additionally, an interpretation of OF-GCE strategies, and their conceptual similarity to correlated equilibria in static games \cite{Myerson:97}, is discussed in \cite[Remark 4.3]{Roy:2025guaranteed}. 
\end{remark}
In what follows, we recall the important properties of OF-GCE from our earlier work in \cite{Roy:22, Roy:2025guaranteed}. 
\begin{theorem} 
Let $\left(\delta^i,\delta^{-i} \right)\in \mathbb R^N_+ $ and $ \left(\bar{\delta}^i,\bar{\delta}^{-i} \right)\in \mathbb R^N_+ $ be two cost profiles such that $\delta^i\leq \bar{\delta}^i$ for all $i\in \mathsf N$,
	and let $\mathscr F^\circ $ and $\bar{\mathscr  F}^\circ $ denote the associated
	set of  OF-GCE strategies respectively. Then, $\mathscr  F^\circ \subseteq \bar{\mathscr  F}^\circ$. 
	\label{thm:LargeDelta}
\end{theorem}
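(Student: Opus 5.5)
The argument is a direct monotonicity check on the guaranteed cost functions $\mathfrak g^i$ defined in \eqref{eq:satisfactionfunction}, combined with Definition \ref{def:FGCE}. No Lyapunov or LMI machinery (Theorem \ref{thm:GCC_discrete_iff}) is needed. The cost $J^i(F^i,F^{-i})$ and the stabilizing set $\mathscr F$ do not depend on the cost profile, so only the strict inequality threshold changes between the two profiles.

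\emph{First step (monotonicity of the GCF).} Fix $i\in\mathsf N$ and any $F^{-i}$, and show $\mathfrak g^i(\delta^i,F^{-i})\subseteq \mathfrak g^i(\bar\delta^i,F^{-i})$ whenever $\delta^i\le\bar\delta^i$. Take $F^i\in\mathfrak g^i(\delta^i,F^{-i})$. By definition, $(F^i,F^{-i})\in\mathscr F$ and $J^i(F^i,F^{-i})<\delta^i$. Then $J^i(F^i,F^{-i})<\delta^i\le\bar\delta^i$, and the membership $(F^i,F^{-i})\in\mathscr F$ is unchanged, so $F^i\in\mathfrak g^i(\bar\delta^i,F^{-i})$.

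\emph{Second step (lift to equilibria).} Let $F^\circ=(F^{i\,\circ},F^{-i\,\circ})\in\mathscr F^\circ$. Then $F^\circ\in\mathscr F$, and for every $i\in\mathsf N$ we have $F^{i\,\circ}\in\mathfrak g^i(\delta^i,F^{-i\,\circ})$. Applying the first step with $F^{-i}=F^{-i\,\circ}$ for each $i$ gives $F^{i\,\circ}\in\mathfrak g^i(\bar\delta^i,F^{-i\,\circ})$ for all $i$. Since $F^\circ\in\mathscr F$ still holds, $F^\circ$ satisfies Definition \ref{def:FGCE} for the profile $(\bar\delta^i,\bar\delta^{-i})$, that is, $F^\circ\in\bar{\mathscr F}^\circ$. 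As $F^\circ$ was arbitrary, $\mathscr F^\circ\subseteq\bar{\mathscr F}^\circ$.

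There is no substantive obstacle. The only point needing care is notational: the set $\mathscr F^\circ$ in Definition \ref{def:FGCE} is written with $F^{-i\,\circ}$ inside $\mathfrak g^i$, and this should be read as the profile's own components $F^{-i}$, i.e., a fixed-point condition. With that reading, the inclusion holds componentwise as above. One could also note that the conclusion is an equality of feasibility conditions under an enlarged threshold. For example, via Theorem \ref{thm:GCC_discrete_iff}, any $P^i$ certifying $x_0^\top P^i x_0<\delta^i$ also certifies $x_0^\top P^i x_0<\bar\delta^i$. This serves only as a cross-check and is not needed.
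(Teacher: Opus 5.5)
Your argument is correct and takes essentially the same route the paper relies on. The paper does not reprove this result here but recalls it from its earlier continuous-time work, where it follows directly from the pointwise inclusion $\mathfrak g^i(\delta^i,F^{-i})\subseteq\mathfrak g^i(\bar\delta^i,F^{-i})$, because neither $J^i$ nor $\mathscr F$ depends on the cost profile. Since your inclusion holds for every fixed $F^{-i}$, the argument also works under either reading of the $F^{-i\,\circ}$ notation in Definition \ref{def:FGCE}.
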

\begin{theorem}
    Let $\left(F^{i\,\star}, F^{-i\,\star} \right)$ be an OF-NE. Then, $\left(F^{i\,\star}, F^{-i\,\star} \right)$ is also an OF-GCE considering the cost profile $\delta^i=J^i(F^{i\,\star},F^{-i\,\star})+\xi$,~$i\in \mathsf N$, for some $\xi > 0$.
    \label{thm:GCE_Nash}
\end{theorem}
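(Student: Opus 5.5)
The plan is to verify the two requirements of Definition \ref{def:FGCE} directly. Fix $\xi>0$ and set $\delta^i := J^i(F^{i\,\star},F^{-i\,\star})+\xi$ for each $i\in\mathsf N$. Since an OF-NE is by definition an element of $\mathscr F$, the profile $(F^{i\,\star},F^{-i\,\star})$ is stabilizing. It is then enough to show, for every $i\in\mathsf N$, that $F^{i\,\star}\in\mathfrak g^i(\delta^i,F^{-i\,\star})$. By \eqref{eq:satisfactionfunction}, this means two things: $(F^{i\,\star},F^{-i\,\star})\in\mathscr F$, and $J^i(F^{i\,\star},F^{-i\,\star})<\delta^i$.

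First, I will record finiteness. Because $A+\sum_{j\in\mathsf N}B^jF^{j\,\star}C^j$ is Schur stable, Lemma \ref{lem:cost_discrete_lyap} applies. Take $\bar A := A+\sum_{j}B^jF^{j\,\star}C^j$ and $\bar Q := {C^i}^\top Q^iC^i+{C^i}^\top {F^{i\,\star}}^\top R^iF^{i\,\star}C^i\succeq 0$. The lemma then gives $J^i(F^{i\,\star},F^{-i\,\star})=x_0^\top Y^i x_0<\infty$, where $Y^i$ is the Stein solution. Hence each $\delta^i$ is a well-defined real number. It is also strictly positive, since $J^i\ge 0$ and $\xi>0$, so $(\delta^i,\delta^{-i})\in\mathbb R^N_+$ is an admissible cost profile.

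Second, the strict inequality $J^i(F^{i\,\star},F^{-i\,\star})<J^i(F^{i\,\star},F^{-i\,\star})+\xi=\delta^i$ holds trivially because $\xi>0$. Combining this with membership in $\mathscr F$ gives $F^{i\,\star}\in\mathfrak g^i(\delta^i,F^{-i\,\star})$ for every $i$. This is exactly the fixed-point condition \eqref{eq:fixedpoint}, so the profile is an OF-GCE.

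Optionally, as a sanity check connecting to Theorem \ref{thm:GCC_discrete_iff}, I could exhibit the certificate. Since $\bar A$ is Schur stable, the strict Lyapunov inequality is solvable. Then item 2 of Lemma \ref{lem:cost_discrete_lyap} supplies $P^i\succ 0$ satisfying \eqref{eq:lyaP_ineq_discrete} with the closed-loop matrix and with $x_0^\top P^i x_0<\delta^i$, because the infimum equals $J^i$ and is strictly below $\delta^i$. This is not needed for the definition itself. The Nash inequality \eqref{eq:NE} is not used at all; only the stabilizing property of an OF-NE matters. The one point requiring care is finiteness of $J^i$ at the equilibrium, so that $\delta^i$ is a real positive number, and Lemma \ref{lem:cost_discrete_lyap} settles this. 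I expect no substantive obstacle.
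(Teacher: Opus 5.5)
Your proof is correct and is essentially the intended argument. An OF-NE lies in $\mathscr F$ by definition, so $J^i(F^{i\,\star},F^{-i\,\star})$ is finite and nonnegative; then $\delta^i>0$ and $J^i(F^{i\,\star},F^{-i\,\star})<\delta^i$ give $F^{i\,\star}\in\mathfrak g^i(\delta^i,F^{-i\,\star})$ for every $i$ directly from the definition of the guaranteed cost response. (The paper only recalls this result from its earlier continuous-time work and does not reproduce a proof.) Your further remarks are also correct but not needed: the Nash inequality is never used, the claim holds for every $\xi>0$, and the optional Lyapunov certificate is valid.
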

Theorem~\ref{thm:LargeDelta} highlights a monotonicity property of OF-GCE. As a consequence of Theorem~\ref{thm:LargeDelta}, Theorem~\ref{thm:GCE_Nash} shows that if an OF-NE exists for the difference game \eqref{eq:LQDG}, then, for an appropriate choice of the cost profile, this OF-NE is also an OF-GCE. As a result, Theorem~\ref{thm:GCE_Nash} implies that the set of guaranteed cost equilibrium strategies is broader than the set of Nash equilibrium strategies. Illustrations of the properties presented in Theorems~\ref{thm:LargeDelta} and \ref{thm:GCE_Nash} can be found in \cite[Figure~2(a)]{Roy:2025guaranteed} and \cite[Figure~2(b)]{Roy:2025guaranteed}, respectively.

Equilibrium outcomes are typically inefficient when compare to a cooperative outcome, since players can jointly reduce their costs through cooperation. In both discrete-time \cite{Lin2020:pareto_infiniteHorizon} and continuous-time \cite{Reddy:13} settings, all Pareto-optimal solutions of the cooperative game \eqref{eq:LQDG} can be obtained via weighted-sum optimal control with weights in the 
$N$-dimensional unit simplex. The Price of Stability (PoS) is a metric commonly used to capture the efficiency loss due to the stability requirement imposed by equilibrium behavior; see \cite{ Anshelevich:08}. The PoS associated with an OF-GCE  strategy profile $F^\circ = (F^{i\,\circ}, F^{-i\,\circ})\in \mathscr{F}^\circ$ defined as  
\begin{align}
	\mathrm{PoS}(F^\circ) :=\frac{\sum_{i \in \mathsf{N}} J^i \left(F^{i\,\circ}, F^{-i\,\circ} \right)}{J_\mathrm{Co}},
	\label{eq:PoSdef}
\end{align}
where $J_\mathrm{Co}$ is the total game cost in cooperation, which is obtained by solving the following optimal control problem 
\begin{align} 
	J_\mathrm{Co}:=\min_{(F^i,F^{-i})}  \sum_{i \in \mathsf{N}} J^{i} \left(F^i,F^{-i} \right), 
	\label{eq:tgcostcoop}
\end{align} 
subject to $x_{k+1} =\left(A+\sum_{i\in \mathsf N} B^i F^i C^i\right)x_k$, $k \in \mathsf{K}$, with a given initial condition $x_0 \in \mathbb{R}^n$. We denote  $\bar{Q}=\sum_{i\in \mathsf N} {C^i}^\top Q^i C^i$, $\bar{R}=\oplus_{i\in \mathsf N}R^i$, and $\bar{B}=[B^1~B^2~\cdots~B^N]$. Since $\bar{Q}\succeq 0$ and $\bar{R}\succ 0$, the optimal cooperative cost \eqref{eq:tgcostcoop} is given by $x_0^\top P x_0$ (see \cite[Theorem 8]{Lin2020:pareto_infiniteHorizon}), 
where $P$ denotes the unique positive definite stabilizing solution of the following 
discrete-time algebraic Riccati equation (DARE).
\begin{align}
	\mathrm{DARE}:~	A^\top PA - P + \bar{Q}-A^\top P \bar{B}\left(\bar{R}+\bar{B}^\top P \bar{B} \right)^{-1} \bar{B}^\top PA =0. \label{eq:DARE}
\end{align}
\begin{theorem}
\label{thm:PoS}
	Let $\left(\delta^i,\delta^{-i} \right)\in \mathbb R^N_+ $ be a  cost profile and let $\mathscr{F}^\circ$ be the set of OF-GCE associated with the difference game \eqref{eq:LQDG}. Let $P$ be a unique positive definite stabilizing solution of DARE \eqref{eq:DARE}. Then, the PoS associated with any $F^\circ\in \mathscr F^\circ$ satisfies 
	\begin{align}
		1\leq	\mathrm{PoS}(F^\circ) < \frac{\sum_{i \in \mathsf{N}} \delta^i }{x_0^\top P x_0}.
		\label{eq:PoS1}
	\end{align}
\end{theorem}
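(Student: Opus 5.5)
The proof splits into the lower bound and the upper bound on $\mathrm{PoS}(F^\circ)$, both obtained by comparing against the cooperative problem \eqref{eq:tgcostcoop}.

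\emph{Lower bound.} Fix $F^\circ=(F^{i\,\circ},F^{-i\,\circ})\in\mathscr F^\circ$. By Definition \ref{def:FGCE}, $F^\circ\in\mathscr F$, so $A+\sum_{i}B^iF^{i\,\circ}C^i$ is Schur stable and every $J^i(F^\circ)$ is finite. This $F^\circ$ is also an admissible (stabilizing) static output feedback profile for \eqref{eq:tgcostcoop}. Hence
\[
\sum_{i\in\mathsf N} J^i(F^\circ)\ \ge\ J_{\mathrm{Co}}.
\]
There is a subtlety here. \eqref{eq:tgcostcoop} is stated as a minimization over output feedback profiles, while $x_0^\top P x_0$ from \eqref{eq:DARE} is the optimal cost over all controls, and in particular over full state feedback $u=\bar F x$ with $\bar B\bar F=\sum_i B^iF^iC^i$. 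To avoid ambiguity I would argue directly that $\sum_i J^i(F^\circ)\ge x_0^\top P x_0$. The stacked input $u_k=[F^{1\,\circ}C^1;\dots;F^{N\,\circ}C^N]x_k$ is a stabilizing state feedback for $(A,\bar B)$, and its cost is exactly $\sum_{k}x_k^\top(\bar Q+\bar F^\top\bar R\bar F)x_k$. Since $\bar Q\succeq0$ and $\bar R\succ0$, the DARE value $x_0^\top P x_0$ is the infimum over all admissible controls \cite[Theorem 8]{Lin2020:pareto_infiniteHorizon}, so this sum is at least $x_0^\top P x_0$.

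With $J_{\mathrm{Co}}=x_0^\top P x_0$ as stated before the theorem, this gives $\mathrm{PoS}(F^\circ)\ge1$. I would also note that $P\succ0$ ensures $x_0^\top P x_0>0$ for $x_0\neq0$, so the ratio is well defined; this requires the implicit assumption $x_0\neq0$.

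\emph{Upper bound.} For each $i$, the condition $F^{i\,\circ}\in\mathfrak g^i(\delta^i,F^{-i\,\circ})$ means, by \eqref{eq:satisfactionfunction}, that $J^i(F^{i\,\circ},F^{-i\,\circ})<\delta^i$. Summing over $i\in\mathsf N$ gives $\sum_i J^i(F^\circ)<\sum_i\delta^i$. Dividing by $J_{\mathrm{Co}}=x_0^\top P x_0>0$ yields the strict upper bound in \eqref{eq:PoS1}.

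The only delicate step is the identification $J_{\mathrm{Co}}=x_0^\top Px_0$, which underpins the lower bound. If \eqref{eq:tgcostcoop} is restricted to output feedback, its minimum could exceed $x_0^\top Px_0$. The chain $\sum_iJ^i(F^\circ)\ge J_{\mathrm{Co}}\ge x_0^\top P x_0$ still holds, but the PoS denominator would then not equal $x_0^\top P x_0$. I would handle this by adopting the paper's stated identification of $J_{\mathrm{Co}}$ with $x_0^\top P x_0$, which follows from the cited result. Under that identification both bounds follow directly from the chain above.
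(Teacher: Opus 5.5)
Your proposal is correct and is essentially the intended argument; the paper recalls this result from its continuous-time predecessor without reproving it. In both, $\mathrm{PoS}\ge 1$ holds because $F^\circ\in\mathscr F$ is admissible for the cooperative problem (its stacked gain is a stabilizing state feedback, so $\sum_i J^i(F^\circ)\ge x_0^\top P x_0$), and the upper bound comes from summing $J^i(F^\circ)<\delta^i$. The subtlety you flag is harmless under either reading of \eqref{eq:tgcostcoop}: since $J_{\mathrm{Co}}\ge x_0^\top P x_0$, one gets $\sum_i J^i(F^\circ)/J_{\mathrm{Co}}<\sum_i\delta^i/J_{\mathrm{Co}}\le\sum_i\delta^i/(x_0^\top P x_0)$, so the identification $J_{\mathrm{Co}}=x_0^\top P x_0$ is not actually needed.
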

\begin{remark}
\label{rem:posbound}
    From \eqref{eq:PoS1}, the PoS bounds are uniform, because they hold for all $(F^{i \,\circ}, F^{-i\,\circ}) \in \mathsf{F}^\circ$. The upper bound implies that OF-GCE solutions lie between the Pareto frontier and the hyperplane $\sum_{i \in \mathsf{N}} J^i = \sum_{i \in \mathsf{N}} \delta^i$. The feasible cost profiles satisfy $\{(\delta^i,\delta^{-i}) \in \mathbb{R}^N_+ \mid \sum_{i \in \mathsf{N}} \delta^i > x_0^\top P x_0\}$, that is, the sum of guaranteed cost profiles of all players must exceed the total cooperative cost. The feasible region tightens as $\sum_i \delta^i$ decreases, which is consistent with Theorem \ref{thm:LargeDelta}. Note that the cost profile $(\delta^i,\delta^{-i})$ is a parameter associated with an OF-GCE and cannot be selected individually by the players; see \cite[Figures 2(c) and 2(d)]{Roy:2025guaranteed} for an illustration. We also note that, if more than one OF-GCE exists, the natural choice is the one with the lowest PoS, since it comes closest to achieving the welfare of the cooperative outcome. 
\end{remark}
\begin{remark} 
Note that we focus on the non-weighted optimization problem in \eqref{eq:tgcostcoop}. 
If a weighted formulation $\min \sum_{i \in \mathsf{N}} \alpha^i J^i(F^i,F^{-i})$, with $\alpha^i>0$ and $\sum_i \alpha^i=1$, is considered, then  $ \min \sum_{i \in \mathsf{N}} \alpha^i J^i(F^i, F^{-i}) \leq \sum_{i \in \mathsf{N}} \alpha^i J^i(F^{i\,\circ}, F^{-i\,\circ}) < \sum_{i \in \mathsf{N}} \alpha^i \delta^i$. This shows that, at an OF-GCE, the joint weighted cost lies between the Pareto frontier and the hyperplane $\sum_i \alpha^i \delta^i$. Since equilibrium efficiency is often quantified using standard metrics such as the Price of Anarchy (PoA) and the PoS \cite{Anshelevich:08}, we restrict attention to the non-weighted sum of players’ costs when computing the PoS for an OF-GCE.
\end{remark}
\begin{remark}
Following \cite[Theorem 2.1]{Engwerda:08} and its discrete-time counterpart, the output feedback cooperative strategies $(F^{i}_{\mathrm{Co}}, F^{-i}_{\mathrm{Co}})$ that achieve the optimal cooperative cost must satisfy $\left[\begin{smallmatrix}
     (F^{1}_{\mathrm{Co}} C^1 )^\top & (F^{2}_{\mathrm{Co}} C^2 )^\top & \ldots & (F^{N}_{\mathrm{Co}} C^N )^\top
\end{smallmatrix} \right]^\top 
= {\small{-(\bar{R} + \bar{B}^\top P \bar{B} )^{-1} \bar{B}^\top P A}}.$ Consequently, the implementable output feedback cooperative strategy $F^{i}_{\mathrm{Co}}$ for player $i \in \mathsf{N}$ is obtained by solving $F^{i}_{\mathrm{Co}}C^i = -E^i (\bar{R} + \bar{B}^\top P\bar{B} )^{-1}\bar{B}^\top PA,~i \in \mathsf{N},$ where $E^i = \begin{bmatrix} 0_{m_i \times m_1} & \ldots & I_{m_i} & \ldots & 0_{m_i \times m_N} \end{bmatrix}$. From this, cooperative output feedback strategies are uniquely solvable if the stabilizing solution $P \succ 0$ of \eqref{eq:DARE} satisfies $E^i (\bar{R} + \bar{B}^\top P\bar{B})^{-1}\bar{B}^\top PA(I - {C^i}^\top(C^i {C^i}^\top)^{-1}C^i) = 0$, $~\forall i \in \mathsf{N}$. As in this case, $x_0^\top P x_0$ provides a lower bound on the cooperative cost.
It is well studied that in distributed control of multi-agent systems, output feedback strategies achieving this bound may not exist \cite{Jiao:20TAC}. However, our approach ensures that the upper bound in \eqref{eq:PoS1} is derived by comparing the total cost at an OF-GCE with this theoretical lower bound. 
\end{remark}
\section{Existence and synthesis of Output feedback guaranteed cost equilibria}
\label{sec:OF_GCE}
In this section, we derive necessary and sufficient conditions for a strategy profile to be a stabilizing OF-GCE. We also provide an iterative algorithm to synthesizing them. 
\subsection{Output feedback guaranteed cost equilibrium}
We begin with a necessary and sufficient conditions for a strategy profile to be a stabilizing OF-GCE.
\begin{theorem}
\label{thm:GCEtest_output_iff}
Let a cost profile $ \left(\delta^i ,\delta^{-i} \right) \in \mathbb{R}^N_+ $ and a strategy profile $ F^\circ := (F^{i\,\circ}, F^{-i\,\circ})$ be given. Define the closed-loop system as $A_\text{cl}(F^\circ) := A + \sum_{j\in \mathsf N} B_j F_{j}^{\circ} C_j.$ Then, for each player $ i \in \mathsf N $, the following statements are equivalent 
\begin{enumerate}
    \item The strategy profile $F^\circ $ is a stabilizing OF-GCE, that is, $ J^i(F^{i\,\circ}, F^{-i\,\circ}) < \delta^i$ and $\sigma(A_\text{cl}(F^\circ)) \subset \mathbb{D}$. 
   \item There exists a matrix $ P^i \succ 0 $ such that
\begin{subequations} %
\label{eq:GCEtest_outputfeedback} 
\begin{align}
&\resizebox{0.90\columnwidth}{!}{$
A_{\mathrm{cl}}(F^\circ)^\top P^i A_{\mathrm{cl}}(F^\circ)
- P^i
+ {C^i}^\top Q^i C^i
+ {C^i}^\top {F^{i\,\circ}}^\top R^i F^{i\,\circ} C^i
\prec 0,
$}
\label{eq:GCEoutput1}\\
& x_0^\top P^i x_0 < \delta^i.
\label{eq:GCEoutput2}
\end{align}
\end{subequations}
\end{enumerate}
\end{theorem}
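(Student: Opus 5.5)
The plan is to reduce the statement, for each fixed player $i\in\mathsf N$, to the single-agent autonomous result in Theorem~\ref{thm:subopt_DT_auto}. The other players' gains $F^{-i\,\circ}$ enter only through the closed-loop matrix, so the game problem is really a family of $N$ independent Lyapunov-type problems sharing one closed-loop matrix. Set
\[
\bar A := A_{\mathrm{cl}}(F^\circ), \qquad \bar Q := {C^i}^\top Q^i C^i + {C^i}^\top {F^{i\,\circ}}^\top R^i F^{i\,\circ} C^i .
\]
Since $Q^i\succeq 0$ and $R^i\succ 0$, we have $\bar Q=\bar Q^\top\succeq 0$. By \eqref{eq:playerfbcost}, player $i$'s cost under $F^\circ$ is exactly $J^i(F^{i\,\circ},F^{-i\,\circ})=\sum_{k}x_k^\top\bar Q x_k$ along $x_{k+1}=\bar A x_k$. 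So $J^i$ coincides with the performance index \eqref{eq:performance_index_1} of system \eqref{eq:discrete_dynamics_1}.

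For (1)$\Rightarrow$(2), I would proceed as follows. Statement 1 gives $\sigma(\bar A)\subset\mathbb D$ and $J^i<\delta^i$. Applying Theorem~\ref{thm:subopt_DT_auto} with $\delta=\delta^i$ then yields $P^i\succ0$ with $\bar A^\top P^i\bar A-P^i+\bar Q\prec 0$ and $x_0^\top P^ix_0<\delta^i$. These are exactly \eqref{eq:GCEoutput1}--\eqref{eq:GCEoutput2}.

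If one wants to see this step without the black box, it goes through Lemma~\ref{lem:cost_discrete_lyap}(2). The cost $J^i$ is the infimum of $x_0^\top Px_0$ over the strict Lyapunov-inequality set. Because $J^i<\delta^i$ strictly, some feasible $P$ lies below $\delta^i$. Equivalently, take the Stein solution $Y$ and perturb it to $P=Y+\varepsilon Z$, where $Z\succ 0$ solves $\bar A^\top Z\bar A-Z=-I$. This gives a strict inequality, and $x_0^\top Px_0<\delta^i$ holds for small $\varepsilon$.

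For (2)$\Rightarrow$(1), the first task is stability. Since $\bar Q\succeq0$, \eqref{eq:GCEoutput1} gives $\bar A^\top P^i\bar A-P^i\prec0$ with $P^i\succ0$. By the discrete Lyapunov theorem, $\bar A$ is Schur. I would spell this out via an eigenvector: if $\bar A v=\lambda v$ with $v\neq 0$, then $(|\lambda|^2-1)v^*P^iv<0$, so $|\lambda|<1$. Hence $F^\circ\in\mathscr F$.

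Next comes the cost bound. Summing the inequality $x_{k+1}^\top P^ix_{k+1}-x_k^\top P^ix_k\le -x_k^\top\bar Qx_k$ over $k$ and using $x_k\to0$ gives $J^i\le x_0^\top P^ix_0<\delta^i$. This is again precisely Theorem~\ref{thm:subopt_DT_auto}.

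Since $i$ was arbitrary, the conditions holding for every $i$ are equivalent to $F^{i\,\circ}\in\mathfrak g^i(\delta^i,F^{-i\,\circ})$ for all $i$, i.e., $F^\circ$ being a stabilizing OF-GCE in the sense of Definition~\ref{def:FGCE}. Stability is common to all players, so it needs to be established only once.

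The only delicate point is the strictness in the necessity direction. The Stein solution $Y$ satisfies the Lyapunov relation with equality and may be only semidefinite, so a strictly feasible $P^i\succ0$ must be produced by the perturbation argument above. This uses the slack $\delta^i-J^i>0$, and it is exactly where the strict cost inequality in statement 1 is needed.
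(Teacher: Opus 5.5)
Your proposal is correct and follows the paper's proof. The paper also fixes $i$, sets $\bar A = A_{\mathrm{cl}}(F^\circ)$ and $\bar Q = {C^i}^\top Q^i C^i + {C^i}^\top {F^{i\,\circ}}^\top R^i F^{i\,\circ} C^i$, and invokes Theorem~\ref{thm:subopt_DT_auto} through Theorem~\ref{thm:GCC_discrete_iff}. Your perturbation $P = Y + \varepsilon Z$ for necessity and your eigenvector and telescoping arguments for sufficiency correctly supply the details the paper omits in Lemma~\ref{lem:cost_discrete_lyap} and Theorem~\ref{thm:subopt_DT_auto}.
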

\begin{proof}
 The proof is similar to that of Theorem \ref{thm:GCC_discrete_iff}, with $A + BFC$, $C^\top QC + C^\top F^\top RFC$, and $P$ replaced by $A_\text{cl}(F^\circ)$, ${C^i}^\top Q^i C^i+ {C^i}^\top {F^{i\, \circ}}^\top R^i F^{i\,\circ} C^i$, and $P^i$, respectively, for each player $i \in \mathsf{N}$. 
\end{proof}
Theorem \ref{thm:GCEtest_output_iff} provides a necessary and sufficient conditions for a strategy profile to be a stabilizing OF-GCE. Note that, the conditions \eqref{eq:GCEoutput1}-\eqref{eq:GCEoutput2} can directly be used with the variable $P^i \succ 0,~i \in \mathsf{N}$, to check whether a given strategy profile is an OF-GCE. However, Theorem \ref{thm:GCEtest_output_iff} does not provide a way to synthesize an OF-GCE, which requires the non-emptiness of the guaranteed cost response. The next result provides the required necessary and sufficient conditions. 
\begin{lemma}
\label{lem:GCi_output_iff}
Let $\delta^i>0$ and $F^{-i}\in \bigtimes_{j \in -i} \mathbb R^{m_j \times s_{j}}$ be given, and define $ A^i := A + \sum_{j \in -i} B^j F^j C^j$. Then, for each player $ i \in \mathsf N $, the following statements are equivalent
\begin{enumerate}
    \item The strategy $F^i \in \mathbb{R}^{m_{i} \times s_{j}}$ belongs to  the set $\mathfrak g^i(\delta^i, F^{-i})$, that is, $\mathfrak g^i(\delta^i,F^{-i}) \neq \emptyset$ and $\sigma(A+\sum_{i\in \mathsf N} B^iF^i C^i) \subset \mathbb{D}$. 
    \item There exists a pair $(P^i, F^i)$ with $P^i\succ 0$ and $F^i\in \mathbb R^{m_i\times s_i}$ such that the following conditions hold
  \begin{subequations}
\begin{align}
& \bigl(A^i + B^i F^i C^i\bigr)^\top P^i
\bigl(A^i + B^i F^i C^i\bigr)
- P^i + {C^i}^\top Q^i C^i \nonumber\\
& \qquad + {C^i}^\top {F^i}^\top R^i F^i C^i
\prec 0,
\label{eq:LMI1_OF}\\
& x_0^\top P^i x_0 < \delta^i.
\label{eq:LMI2_OF}
\end{align}
  \end{subequations}  
\end{enumerate}
\end{lemma}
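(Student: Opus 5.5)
The plan is to reduce the lemma to Theorem~\ref{thm:GCC_discrete_iff} by viewing player $i$'s problem, with $F^{-i}$ frozen, as a single-agent suboptimal output feedback problem. Fix $\delta^i>0$ and $F^{-i}$, and set $A^i := A+\sum_{j\in -i}B^jF^jC^j$. Then
\begin{equation*}
A+\sum_{j\in\mathsf N}B^jF^jC^j = A^i + B^iF^iC^i .
\end{equation*}
Moreover, by \eqref{eq:playerfbcost}, $J^i(F^i,F^{-i})$ equals the cost \eqref{eq:Fcost} for the data $(A,B,C,Q,R,F)$ replaced by $(A^i,B^i,C^i,Q^i,R^i,F^i)$. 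So the pair $(F^i,F^{-i})$ induces exactly the closed loop \eqref{eq:linsystem} and the cost \eqref{eq:Fcost} of the single-agent system with these substituted matrices.

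For the direction 1) $\Rightarrow$ 2), take $F^i\in\mathfrak g^i(\delta^i,F^{-i})$. By the definition \eqref{eq:satisfactionfunction}, $(F^i,F^{-i})\in\mathscr F$, which means $\sigma(A^i+B^iF^iC^i)\subset\mathbb D$, and also $J^i(F^i,F^{-i})<\delta^i$. This is statement 1) of Theorem~\ref{thm:GCC_discrete_iff} for the substituted system. That theorem then gives some $P^i\succ0$ satisfying \eqref{eq:lyaP_ineq_discrete}--\eqref{eq:cost_bound}, and these are literally \eqref{eq:LMI1_OF}--\eqref{eq:LMI2_OF}.

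For the direction 2) $\Rightarrow$ 1), take a pair $(P^i,F^i)$ satisfying \eqref{eq:LMI1_OF}--\eqref{eq:LMI2_OF}. Theorem~\ref{thm:GCC_discrete_iff} in the reverse direction, or equivalently Theorem~\ref{thm:subopt_DT_auto} with $\bar A=A^i+B^iF^iC^i$ and $\bar Q={C^i}^\top Q^iC^i+{C^i}^\top{F^i}^\top R^iF^iC^i\succeq0$, gives $\sigma(A^i+B^iF^iC^i)\subset\mathbb D$ and $J^i<\delta^i$. Hence $(F^i,F^{-i})\in\mathscr F$ and $F^i\in\mathfrak g^i(\delta^i,F^{-i})$, so in particular this set is nonempty.

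The only delicate point is the bookkeeping around $\mathscr F$. Membership in $\mathscr F$ is a joint condition on the profile, but once $F^{-i}$ is frozen it reduces exactly to Schur stability of $A^i+B^iF^iC^i$. I will state this reduction explicitly. I will also note that Theorem~\ref{thm:GCC_discrete_iff} only requires $Q^i\succeq0$ and $R^i\succ0$, so that $\bar Q\succeq0$ as Lemma~\ref{lem:cost_discrete_lyap} requires. Beyond this there is no real obstacle: the stability part comes for free from the strict Lyapunov inequality, because $\bar Q\succeq0$ forces $\bar A^\top P^i\bar A-P^i\prec0$.
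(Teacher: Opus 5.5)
Your proposal is correct and follows essentially the same route as the paper. You freeze $F^{-i}$, observe that the closed loop is $A^i+B^iF^iC^i$ and that $J^i$ is the single-agent cost with data $(A^i,B^i,C^i,Q^i,R^i)$, and then invoke Theorem~\ref{thm:GCC_discrete_iff} (ultimately Theorem~\ref{thm:subopt_DT_auto}); this is exactly the chain of substitutions behind the paper's one-line proof. Your explicit handling of membership in $\mathscr F$ and of $\bar Q\succeq 0$ is a welcome addition. Going through Theorem~\ref{thm:subopt_DT_auto} directly also sidesteps the standing stabilizability assumption attached to Theorem~\ref{thm:GCC_discrete_iff}, which need not hold a priori for $(A^i,B^i,C^i)$ but is never actually used.
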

\begin{proof}
The proof follows along the same lines of the proof of Theorem \ref{thm:GCEtest_output_iff}.
\end{proof}
Note that the inequality \eqref{eq:LMI1_OF} is a bilinear matrix inequality (BMI) in the variables $P^i$ and $F^i$, whose feasibility is generally nonconvex problem \cite{Mesbahi:2000}. Nevertheless, we show that the feasibility of \eqref{eq:LMI1_OF} together with \eqref{eq:LMI2_OF}, and hence the non-emptiness of the guaranteed cost response, can be verified using the projection lemma \cite{Iwasaki:94, Skelton:17}, which we recall in the following result.
\begin{lemma}
	Let $  U \in \mathbb R^{n\times m}$, $\text{rank}(U)=m<n$,
	$V\in \mathbb R^{s\times n}$, $\text{rank}(V)=s<n$, and 
	$\Phi \in \mathbb R^{n\times n}$,~$\Phi=\Phi^\top$, be given. Then, there exists $F\in \mathbb R^{m\times s}$
	satisfying $\Phi+U F V+ (U F V)^\top  \prec 0$
	if and only if $\mathcal N_{U^\top}^\top \Phi \mathcal N_{U^\top} \prec 0$ and $\mathcal N_V^\top \Phi \mathcal N_V \prec 0$ hold, where $\mathcal N_{S^\top}
	\in \mathbb R^{n\times (n-m)}$, $\mathcal N_V\in \mathbb R^{n\times (n-s)}$, denoting any matrices whose columns form orthonormal
	bases of the null spaces of $U^\top$, $V$ respectively.
	\label{lem:Finsler}
\end{lemma}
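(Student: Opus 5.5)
The statement is the classical projection (elimination) lemma, so the plan is to prove it directly with linear algebra; nothing from the game-theoretic part of the paper is needed.

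\emph{Necessity.} Suppose some $F$ gives $\Phi+UFV+(UFV)^\top\prec 0$. Take $\mathcal N_{U^\top}$, whose columns span $\ker(U^\top)$, so $U^\top\mathcal N_{U^\top}=0$. A congruence with $\mathcal N_{U^\top}$ annihilates both $\mathcal N_{U^\top}^\top UFV\,\mathcal N_{U^\top}$ and its transpose. Since $\mathcal N_{U^\top}$ has full column rank, the congruence preserves strict negativity, so $\mathcal N_{U^\top}^\top\Phi\mathcal N_{U^\top}\prec 0$. The same argument with $\mathcal N_V$, where $V\mathcal N_V=0$, gives the second condition.

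\emph{Sufficiency} is the substantive direction. I would first reduce $U$ and $V$ to canonical form. Choose orthogonal $T_U$ and invertible factors with $U=T_U\left[\begin{smallmatrix}I_m\\0\end{smallmatrix}\right]R_U$ (from a QR or SVD factorization). Since $R_U$ is invertible, $F$ can be replaced by $R_UF$, so without loss of generality $U=\left[\begin{smallmatrix}I_m\\0\end{smallmatrix}\right]$ after an orthogonal change of coordinates. The difficulty is that $U$ and $V$ cannot be normalized by a single congruence, because their ranges and kernels are in general position. The cleanest route is therefore not to normalize both, but to use a two-stage Finsler argument.

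\emph{Stage 1: eliminate $U$.} Write $\Phi+UFV+(UFV)^\top = \Phi + UK + K^\top U^\top$ with $K=FV$. Use the standard lemma that for fixed $U$ the following are equivalent: there exists an arbitrary $K$ with $\Phi+UK+K^\top U^\top\prec0$; $\mathcal N_{U^\top}^\top\Phi\mathcal N_{U^\top}\prec0$; and $\Phi-\sigma UU^\top\prec0$ for some $\sigma>0$. This is Finsler's lemma, proved by a Schur complement in coordinates where $U=\left[\begin{smallmatrix}I\\0\end{smallmatrix}\right]$: with $\Phi=\left[\begin{smallmatrix}\Phi_{11}&\Phi_{12}\\ \Phi_{12}^\top&\Phi_{22}\end{smallmatrix}\right]$ and $\Phi_{22}\prec0$, the choice $K=-\tfrac{\sigma}{2}U^\top$ works for large $\sigma$.

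\emph{Stage 2: handle the restriction that $K$ must have the form $FV$.} Set $\Psi_\sigma:=\Phi-\sigma UU^\top$. By Stage 1, $\Psi_\sigma\prec0$ for large $\sigma$, although I do not yet know that $F$ exists. I would instead look for $F$ in the form $F=-\tfrac{\sigma}{2}\,G$ and use the structure $V=[0\;\;I_s]$ after a second orthogonal change of coordinates, which is allowed because it acts only on the right factor. In these coordinates $\mathcal N_V$ spans the first $n-s$ coordinates, and the hypothesis $\mathcal N_V^\top\Phi\mathcal N_V\prec0$ says the corresponding principal block of $\Phi$ is negative definite. The target inequality is then
\begin{align}
\begin{bmatrix}\Phi_{aa} & \Phi_{ab}+\tilde U_a F\\ \star & \Phi_{bb}+\tilde U_bF+F^\top\tilde U_b^\top\end{bmatrix}\prec0,
\end{align}
where $\tilde U$ is $U$ expressed in the new basis. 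By a Schur complement on $\Phi_{aa}\prec0$, this reduces to a condition of the same form on the smaller space $\mathbb R^s$: a matrix $\Phi'+\tilde U'F+F^\top\tilde U'^\top\prec0$ with $F$ now completely free. Stage 1 then solves it, provided the projected condition involving $\ker(\tilde U'^\top)$ holds. That projected condition should follow from the remaining hypothesis $\mathcal N_{U^\top}^\top\Phi\mathcal N_{U^\top}\prec0$, via a Schur-complement identity on the common null space.

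\emph{Main obstacle.} The hardest step is this last verification: showing that after the Schur complement on the $\ker V$ block, the $U$-null-space condition transfers correctly. A subtlety arises when the ranges of $U$ and $V^\top$ intersect nontrivially. If the direct bookkeeping becomes messy, the fallback is the standard proof of Gahinet--Apkarian/Iwasaki--Skelton: parametrize all $X=UFV$ solutions through a matrix $L$ with $\|L\|<1$, or cite \cite{Iwasaki:94, Skelton:17} directly, as the paper does.
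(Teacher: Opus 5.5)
Your necessity argument and Stage~1 (Finsler with $K=-\tfrac{\sigma}{2}U^\top$) are correct. The sufficiency direction, however, has a genuine gap, and it is not the one you flag. The paper does not prove this lemma; it only recalls it from \cite{Iwasaki:94, Skelton:17}, so a self-contained argument has to close this step itself.

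The Schur complement on $\Phi_{aa}\prec 0$ does \emph{not} give an affine inequality in $F$. Expanding $\Phi_{bb}+\tilde U_bF+F^\top\tilde U_b^\top-(\Phi_{ab}+\tilde U_aF)^\top\Phi_{aa}^{-1}(\Phi_{ab}+\tilde U_aF)$ gives
\[
\Phi'+\hat U F+F^\top\hat U^\top+F^\top W F\prec 0,
\]
where $\Phi':=\Phi_{bb}-\Phi_{ab}^\top\Phi_{aa}^{-1}\Phi_{ab}$, $\hat U:=\tilde U_b-\Phi_{ab}^\top\Phi_{aa}^{-1}\tilde U_a$ and $W:=-\tilde U_a^\top\Phi_{aa}^{-1}\tilde U_a\succeq 0$. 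The quadratic term vanishes only if $\tilde U_a=0$, i.e.\ $\mathrm{range}(U)\subseteq\mathrm{range}(V^\top)$. Otherwise it is a positive semidefinite term working against you. The reduced problem is therefore a Riccati-type inequality, Stage~1 cannot be applied to it, and there is no affine projected condition of the kind you describe to verify. You also leave the transfer of $\mathcal N_{U^\top}^\top\Phi\mathcal N_{U^\top}\prec0$ unproven and fall back on citation, so sufficiency is not established.

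The route can be repaired with one extra idea: complete the square on $\mathrm{range}(W)$ and use Finsler only on $\ker W$. Let $P_0$ be the orthogonal projector onto $\ker W$ and $W^+$ the Moore--Penrose inverse, and take $F=-W^{+}\hat U^\top-\tau P_0\hat U^\top$. Since $WP_0=0$ and $W^+WW^+=W^+$, the reduced matrix becomes $\Phi'-\hat UW^{+}\hat U^\top-2\tau\,\hat UP_0\hat U^\top$. By Finsler's lemma this is $\prec0$ for large $\tau$ if and only if $z^\top(\Phi'-\hat UW^{+}\hat U^\top)z<0$ for all $z\neq0$ with $P_0\hat U^\top z=0$.

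This is exactly what the $U$-hypothesis provides. Substituting $x_a=z_a-\Phi_{aa}^{-1}\Phi_{ab}x_b$, it reads $z_a^\top\Phi_{aa}z_a+x_b^\top\Phi'x_b<0$ whenever $\tilde U_a^\top z_a+\hat U^\top x_b=0$ and $(z_a,x_b)\neq0$. The constraint is solvable in $z_a$ if and only if $\hat U^\top x_b\in\mathrm{range}(\tilde U_a^\top)=\mathrm{range}(W)$, i.e.\ $P_0\hat U^\top x_b=0$. Under it, the maximum of the concave term $z_a^\top\Phi_{aa}z_a$ is attained and equals $-x_b^\top\hat UW^{+}\hat U^\top x_b$.

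With this step your two-stage argument becomes a complete, elementary proof, which is more than the paper supplies. Without it, you are relying on the same citation the paper uses.
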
 
\begin{theorem}
\label{thm:GCi_OF}
Let $\delta^i>0$ and $F^{-i} \in \bigtimes_{j \in -i} \mathbb R^{m_j \times s_j} $ be given.    Consider the following sets
\begin{subequations}
\begin{align}
\mathscr{X}^i
&:= \Big\{X \in \mathbb{R}^{n\times n}~\big|~
X \succ 0,\;
\mathcal{N}_{\tilde{C}^i}^\top
\Phi_1^i(X)
\mathcal{N}_{\tilde{C}^i}
\prec 0
\Big\},
\label{eq:XLMI}\\
\mathscr{Y}^i
&:= \Big\{Y \in \mathbb{R}^{n\times n}~\big|~
Y \succ 0,\;
\begin{bmatrix}
\delta^i & x_0^\top\\
x_0 & Y
\end{bmatrix}
\succ 0,
\nonumber\\
&\qquad \qquad \qquad \qquad \quad \quad
\mathcal{N}_{\tilde{B}^i}^\top
\Phi_2^i(Y)
\mathcal{N}_{\tilde{B}^i}
\prec 0
\Big\}, 
\label{eq:YLMI}
\end{align}
\end{subequations}
\text{where,} 
\begin{subequations}
    \begin{align}
            \Phi_1^i \left(X\right) &:=\begin{bmatrix} 
				-X  &{A^i}^\top X &\left(\sqrt{Q^i}C^i \right)^\top  & 0_{n\times  {m}_i }\\
                X A^i  &-X &0_{n \times s_{i}} &0_{n \times m_{i}} \\
				\sqrt{Q^i}C^i   &0_{s_{i} \times n} & -I_{s_i} &0_{ s_i \times m_i}\\
				0_{{m}_i\times n} &0_{{m}_i\times n} &0_{m_i\times s_i }&-(R^i)^{-1} \end{bmatrix},\\
			\Phi_2^i \left(Y \right)&:=\begin{bmatrix} 
                -Y  &Y {A^i}^\top &\left (\sqrt{Q^i}C^i Y \right)^\top  & 0_{n\times  {m}_i }\\
                A^iY  &-Y &0_{n \times s_{i}} &0_{n \times m_{i}} \\
				\sqrt{Q^i}C^iY   &0_{s_{i} \times n} & -I_{s_i} &0_{ s_i \times m_i}\\
				0_{{m}_i\times n} &0_{{m}_i\times n} &0_{m_i\times s_i }&-(R^i)^{-1}
				 \end{bmatrix},
        \end{align}
        \end{subequations}
		and $A^i:=A+\sum_{j\in -i}B^j F^jC^j$. The matrices $\mathcal N_{\tilde{C}^i}:= \ker(\tilde{C}^i)$ and $\mathcal N_{\tilde{B}^i}:= \ker(\tilde{B}^i)$ denote matrices with orthonormal columns which span the null spaces of the matrices $\tilde{C}^i := \begin{bmatrix} C^{i} &0_{s_{i} \times n} &0_{s_{i} \times s_{i}} &0_{s_{i} \times m_{i}} \end{bmatrix}$ and $\tilde{B}^i := \begin{bmatrix} 0_{m_{i} \times n} &{B^i}^\top & 0_{m_{i} \times s_{i}} &I_{m_i}\end{bmatrix}$ respectively. Define the set
		\begin{align}
			\mathscr{P}^i&:=\left \{P^i\in \mathbb R^{n\times n} ~|~P^i\succ 0,  ~P^i \in \mathscr{X}^i, ~(P^i)^{-1} \in \mathscr{Y}^i \right\}.\label{eq:setPi}
		\end{align}
		Then, there exists an $ F^i \in \mathfrak{g}^i(\delta^i, F^{-i}) \neq \emptyset$, and  for any $F^i \in \mathfrak{g}^i(\delta^i, F^{-i}) $ we have $\sigma(A^i + B^i F^i C^i) \subset \mathbb{D}$ if and only if $ \mathscr{P}^i \neq \emptyset $. In this case, any $ F^i \in \mathfrak{g}^i(\delta^i, F^{-i}) $ satisfies the following LMI for some $ P^i \in \mathscr{P}^i $ 
		\begin{align}
\resizebox{0.98\columnwidth}{!}{$
\begin{bmatrix}
-P^i
&
\bigl(A^i + B^i F^i C^i\bigr)^\top P^i
&
\bigl(\sqrt{Q^i}C^i\bigr)^\top
&
\bigl(\sqrt{R^i}F^iC^i\bigr)^\top
\\
P^i\bigl(A^i + B^i F^i C^i\bigr)
&
-P^i
&
0_{n\times s_i}
&
0_{n\times m_i}
\\
\sqrt{Q^i}C^i
&
0_{s_i\times n}
&
-I_{s_i}
&
0_{s_i\times m_i}
\\
\sqrt{R^i}F^iC^i
&
0_{m_i\times n}
&
0_{m_i\times s_i}
&
-I_{m_i}
\end{bmatrix}
\prec 0.
$}
\label{eq:FGCi}
\end{align} 
\end{theorem}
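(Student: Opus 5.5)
By Lemma~\ref{lem:GCi_output_iff}, the claim that a stabilizing $F^i\in\mathfrak g^i(\delta^i,F^{-i})$ exists is equivalent to the joint feasibility of \eqref{eq:LMI1_OF}--\eqref{eq:LMI2_OF} in $(P^i,F^i)$. The plan is to convert this BMI into the LMI \eqref{eq:FGCi}, which is affine in $F^i$ for fixed $P^i$. We then eliminate $F^i$ with the projection lemma (Lemma~\ref{lem:Finsler}), and finally rescale the two resulting conditions so that they become the sets $\mathscr X^i$ and $\mathscr Y^i$.

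\textbf{Step 1: Schur complements.} Write $\bar A:=A^i+B^iF^iC^i$. Since $P^i\succ0$, \eqref{eq:LMI1_OF} is equivalent to \eqref{eq:FGCi}. To see this, take Schur complements of the blocks $-P^i$, $-I_{s_i}$ and $-I_{m_i}$: this yields $-P^i+\bar A^\top P^i\bar A+{C^i}^\top Q^iC^i+{C^i}^\top{F^i}^\top R^iF^iC^i\prec0$. It is more convenient to use an equivalent form in which the last diagonal block is $-(R^i)^{-1}$ and the coupling block is $F^iC^i$ rather than $\sqrt{R^i}F^iC^i$. This form is obtained by a congruence with $\mathrm{diag}(I,I,I,(\sqrt{R^i})^{-1})$, which gives $-(R^i)^{-1}$ in the last block. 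Similarly, \eqref{eq:LMI2_OF} is equivalent to $\left[\begin{smallmatrix}\delta^i & x_0^\top\\ x_0 & (P^i)^{-1}\end{smallmatrix}\right]\succ0$.

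\textbf{Step 2: Isolate $F^i$ and apply the projection lemma.} Collect every term involving $F^i$ in the modified matrix. The matrix splits as $\Phi_1^i(P^i)+\mathcal U F^i\tilde C^i+(\mathcal UF^i\tilde C^i)^\top$, where $\mathcal U:=[0\;\;P^iB^i\;\;0\;\;I_{m_i}]^\top$, so that $\mathcal U^\top=\tilde B^i\,\mathrm{diag}(I,P^i,I,I)$. Lemma~\ref{lem:Finsler} then states that a suitable $F^i$ exists if and only if two conditions hold.
\begin{itemize}
\item The first is $\mathcal N_{\tilde C^i}^\top\Phi_1^i(P^i)\mathcal N_{\tilde C^i}\prec0$, which says exactly that $P^i\in\mathscr X^i$.
\item The second is $\mathcal N_{\mathcal U^\top}^\top\Phi_1^i(P^i)\mathcal N_{\mathcal U^\top}\prec0$.
\end{itemize}
The rank hypotheses of Lemma~\ref{lem:Finsler} hold: $\tilde C^i$ has full row rank because $\mathrm{rank}(C^i)=s_i$, and $\mathcal U$ has full column rank because of its $I_{m_i}$ block.

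\textbf{Step 3: Convert the second condition into membership in $\mathscr Y^i$.} Let $T:=\mathrm{diag}(Y,Y,I,I)$ with $Y=(P^i)^{-1}$. Then $\ker(\mathcal U^\top)=\ker(\tilde B^i T^{-1})=T\ker(\tilde B^i)$, so we may take $\mathcal N_{\mathcal U^\top}=T\mathcal N_{\tilde B^i}$ up to an invertible right factor, which does not affect negative definiteness. A direct block calculation gives $T\Phi_1^i(P^i)T=\Phi_2^i(Y)$. Hence the second condition is equivalent to $\mathcal N_{\tilde B^i}^\top\Phi_2^i(Y)\mathcal N_{\tilde B^i}\prec0$. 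Combined with the cost-bound LMI from Step~1, this is exactly $(P^i)^{-1}\in\mathscr Y^i$, so the two projection conditions together say $P^i\in\mathscr P^i$.

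\textbf{Step 4: Stability and the final assertion.} Stability of $A^i+B^iF^iC^i$ follows from the $(1,1)$/$(2,2)$ principal submatrix of \eqref{eq:FGCi}: its Schur complement gives $\bar A^\top P^i\bar A-P^i\prec0$, which is a discrete Lyapunov inequality and hence implies Schur stability. Necessity runs Steps~1--3 backwards, using Lemma~\ref{lem:GCi_output_iff} to produce the pair $(P^i,F^i)$. For the final claim: if $F^i\in\mathfrak g^i(\delta^i,F^{-i})$, Lemma~\ref{lem:GCi_output_iff} supplies a $P^i$ satisfying \eqref{eq:LMI1_OF}--\eqref{eq:LMI2_OF}. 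That $P^i$ satisfies \eqref{eq:FGCi} by Step~1, and it lies in $\mathscr P^i$ because \eqref{eq:FGCi} implies both projection conditions.

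The main obstacle is Step~3. The null space of $\mathcal U^\top$ depends on $P^i$, and the aim is to show that the resulting condition is equivalent to the $P^i$-independent null-space condition on $\tilde B^i$ with the inverse variable $Y$. This requires carefully checking the congruence $T\Phi_1^iT=\Phi_2^i$, including the last block $-(R^i)^{-1}$ and the placement of the $I_{m_i}$ entry in $\tilde B^i$.
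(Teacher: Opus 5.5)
Your proposal is correct and follows essentially the same route as the paper. Both arguments use Lemma~\ref{lem:GCi_output_iff}, then a Schur complement to reach an LMI affine in $F^i$, then elimination of $F^i$ via Lemma~\ref{lem:Finsler}, then a null-space rescaling that turns the $P^i$-dependent condition into $\mathcal N_{\tilde B^i}^\top\Phi_2^i((P^i)^{-1})\mathcal N_{\tilde B^i}\prec 0$, and finally a Schur complement of the cost bound. Your scaling $T=\mathrm{diag}((P^i)^{-1},(P^i)^{-1},I,I)$ is slightly cleaner than the paper's $I_n\oplus (P^i)^{-1}\oplus I_{s_i}\oplus I_{m_i}$: it gives $T\Phi_1^i(P^i)T=\Phi_2^i((P^i)^{-1})$ exactly. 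The two scalings span the same null space because the first block column of $\tilde B^i$ is zero.
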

\begin{proof}
\textbf{$\Rightarrow$} First, we show that $\mathscr{P}^i \neq \emptyset$ if there exists  $F^i \in \mathfrak{g}^i(\delta^i, F^{-i})$ such that $A^i + B^i F^i C^i$ is Schur stable and $J^i(F^i, F^{-i}) < \delta^i$. To this end, we assume that such an $F^i$ exists. That is $F^i \in \mathfrak{g}^i(\delta^i, F^{-i})$ ensures the Schur stability of $A^i + B^i F^i C^i$ and satisfies $J^i(F^i, F^{-i}) < \delta^i$.
 This implies that the conditions \eqref{eq:LMI1_OF} and \eqref{eq:LMI2_OF} in Lemma \ref{lem:GCi_output_iff} are satisfied. Rewrite BMI \eqref{eq:LMI1_OF} as follows
\begin{align}
&-P^i + {C^i}^\top Q^i C^i
+ {C^i}^\top {F^i}^\top R^i F^i C^i
\nonumber\\
&\qquad
+ \bigl(A^i + B^i F^i C^i\bigr)^\top
P^i \bigl({P^i} \bigr)^{-1} P^i
\bigl(A^i + B^i F^i C^i\bigr)
\prec 0.
\label{eq:BMI_1}
\end{align}
Since $Q^i\succeq 0$, we write ${Q}^i=\sqrt{Q^i} (\sqrt{Q^i})^\top$. Using the Schur complement and the  notation provided in the theorem statement, the BMI  \eqref{eq:BMI_1} is written as 
\begin{align}
    \Phi_1^i(P^i) + (\bar{B}^i)^\top F^i \tilde{C}^i + (\tilde{C}^i)^\top {F^i}^\top \bar{B}^i \prec 0, 
    \label{eq:BMI_2}
\end{align}
where $\bar{B}^i =  \begin{bmatrix} 0_{m_{i} \times n} &  {B^i}^\top P^i &0_{m_{i} \times s_{i}} &I_{m_{i}} \end{bmatrix}$. From Lemma \ref{lem:Finsler}, the above BMI  \eqref{eq:BMI_2} (in $(F^i,P^i)$) is feasible if and only if the following LMIs (in $P^i$) in \eqref{eq:projection_lmi1} are feasible. 
\begin{align}
		\mathcal N_{\tilde{C}^i}^\top \Phi_1^i(P^i)  \mathcal N_{\tilde{C}^i} \prec 0, ~ {\mathcal {N}}_{\bar{B}^{i}}^\top \Phi_1^i (P^i)   \mathcal {N}_{\bar{B}^{i}} \prec 0. 		\label{eq:projection_lmi1}
	\end{align}
Here, $\mathcal{N}_{\bar{B}^i} := \ker(\bar{B}^i)$. The nullspaces $ {\mathcal{N}}_{\bar{B}^{i}}$ and  $\mathcal {N}_{\tilde{B}^{i}}$ are related as $ {\mathcal {N}}_{\bar{B}^{i}} = (I_n \oplus {P^i}^{-1} \oplus I_{s_{i}} \oplus I_{m_i})\mathcal {N}_{\tilde{B}^{i}}$, and using this, \eqref{eq:projection_lmi1} (and as a result, \eqref{eq:LMI1_OF}) is equivalently written as 
\begin{align}
\mathcal  N_{\tilde{C}_{i}}^\top \Phi_1^i \left(P^i \right) \mathcal N_{\tilde{C}^{i}} \prec 0, ~ \mathcal  N_{\tilde{B}^{i}}^\top \Phi_2^i ({P^i}^{-1}) \mathcal N_{\tilde{B}^{i}} \prec 0.  
\label{eq:projection_lmi2} 
\end{align}
Using the Schur complement, the inequality \eqref{eq:LMI2_OF} can be equivalently written as
\begin{align}
\begin{bmatrix}
\delta^i & x_0^\top \\
x_0 & (P^i)^{-1}
\end{bmatrix}
\succ 0.
\label{eq:LMI_ub2_OF}
\end{align}	 
Since the matrix inequalities \eqref{eq:projection_lmi2} and \eqref{eq:LMI_ub2_OF} jointly characterize the set $\mathscr{P}^i$, as defined in \eqref{eq:setPi}. Which implies that set $\mathscr{P}^i \neq \emptyset$. 

\textbf{$\Leftarrow $} Next, we prove the converse: if $\mathscr{P}^i \neq \emptyset$, then there exists an 
$F^i \in \mathfrak{g}^i (\delta^i, F^{-i} )$ such that 
$(A^i + B^i F^i C^i)$ is Schur stable, and  
$J^i(F^i, F^{-i}) < \delta^i$.  To show this, we assume that $\mathscr{P}^i \neq \emptyset$. This implies that there exists 
a matrix $P^i$ satisfying \eqref{eq:projection_lmi2} 
(equivalently, \eqref{eq:projection_lmi1}) and \eqref{eq:LMI_ub2_OF}. 
Using Lemma \ref{lem:Finsler} to \eqref{eq:projection_lmi2}, which implies that there exists $F^i$ such that \eqref{eq:BMI_2} holds. 
After simplifying \eqref{eq:projection_lmi2}, and applying the Schur complement backwardly, \eqref{eq:projection_lmi2} can be written as \eqref{eq:BMI_2} (equivalently, \eqref{eq:LMI1_OF}). 
In a similar manner, applying the Schur complement to \eqref{eq:LMI_ub2_OF} results in \eqref{eq:LMI2_OF}. By assumption, $P^i \in \mathscr{P}^i$ which is a positive-definite matrix. Now, from \eqref{eq:LMI1_OF} and \eqref{eq:LMI_ub2_OF} (equivalently \eqref{eq:LMI2_OF}), and using Lemma \ref{lem:GCi_output_iff}, we conclude that $(A^i +B^iF^iC^i) \subset \mathbb{D}$ and $J^i(F^i, F^{-i}) < \delta^i$. Moreover, for a given $P^i \in \mathscr{P}^i$ such $F^i \in \mathbb{R}^{m_i \times s_{i}}$ exists and obtained using \eqref{eq:FGCi}. 
\end{proof}
\begin{remark}
Theorems \ref{thm:GCEtest_output_iff} and \ref{thm:GCi_OF} can be viewed as discrete-time counterparts of \cite[Theorems 5.4 and 5.7]{Roy:2025guaranteed}, which were established for continuous-time LQ games. The main difference is that, in the continuous-time setting, only sufficient conditions for the existence of an OF-GCE were derived, whereas the present work provides both necessary and sufficient conditions for the existence of an OF-GCE in discrete-time LQ games. 
\end{remark}
\subsection{Algorithm for the synthesis of an OF-GCE}
\label{sec:algorithm_OF}
In the output feedback case, the set $\mathscr{P}^i$ in \eqref{eq:setPi} is non-convex due to the bilinear constraint $P^i W^i = I_n$, where $W^i := (P^i)^{-1}$. As a result, the non-emptiness of $\mathscr{P}^i$ cannot be verified as a convex optimization problem. In what follows, we provide a convex approximation of the set $\mathscr{P}^i$ using a semi-definite programming (SDP) relaxation to verify the non-emptiness of the guaranteed cost response. For computational purposes, it is preferable to work with closed sets. However, the sets $\mathscr{X}^i$  and $\mathscr{Y}^i$, defined using the LMIs \eqref{eq:XLMI} and \eqref{eq:YLMI}, are open and convex. Therefore, we introduce the closed ``$\epsilon$-approximations'' of the sets $\mathscr{X}^i$ and $\mathscr{Y}^i$. Now, we have $\mathscr X^i_\epsilon := 
\{ X \in \mathbb R^{n\times n} ~\mid~
X \succ 0,~
\mathcal N_{\tilde{C}^i}^\top~
\Phi_1^i(X)~
\mathcal N_{\tilde{C}^i}
\preceq -\epsilon I_{2n+s_i+m_i}
\}$, 
$\mathscr Y^i_\epsilon := 
 \{ Y \in \mathbb R^{n\times n} ~\mid~
Y \succ 0,~
\left[\begin{smallmatrix}
\delta^i & x_0^\top \\
x_0 & Y
\end{smallmatrix} \right]
\succeq \epsilon I_{n+1}, ~\mathcal N_{\tilde{B}^i}^\top~
\Phi_2^i (Y)~
\mathcal N_{\tilde{B}^i}
\preceq -\epsilon I_{2n+s_i+m_i}
\}$, where $\epsilon > 0$. As a result, the sets $\mathscr{X}^i_{\epsilon}$  and $\mathscr{Y}^i_{\epsilon}$ are closed and convex. In particular, these sets are  positive definite cones. Moreover, they satisfy $\mathscr{X}^i_\epsilon \subseteq \mathscr{X}^i$ and $\mathscr{Y}^i_\epsilon \subseteq \mathscr{Y}^i$, and converge to $\mathscr{X}^i$ and $\mathscr{Y}^i$ as $\epsilon \to 0$. Using sets $\mathscr{X}^i_{\epsilon}$ and  $\mathscr{Y}^i_{\epsilon}$, an $\epsilon$-approximation of the non-convex set \eqref{eq:setPi} is given by 
\begin{align}
\mathscr{P}^i_\epsilon
&:= \Big\{P^i, W^i \in \mathbb{R}^{n\times n}~\mid~
P^i \succ 0,\;
W^i \succ 0,\;
P^iW^i = I_n,
\nonumber\\
&\qquad \qquad \qquad \qquad\qquad
P^i \in \mathscr{X}_\epsilon^i,\;
W^i \in \mathscr{Y}_\epsilon^i
\Big\}.
\label{eq:setPi_bilinear}
\end{align}
Note that the feasibility of \eqref{eq:setPi_bilinear}, that is, $\mathscr{P}^i_\epsilon \neq \emptyset$, is a cone complementarity problem \cite{Ghaoui:97}, which can be viewed as an extension of linear complementarity problems \cite{Cottle:09} to the cone of positive definite matrices. By replacing the coupling constraint $P^i W^i = I_n$ using the SDP relaxation \cite{Leibfritz:01}, as $P^i\succeq (W^i)^{-1} \Leftrightarrow \left[\begin{smallmatrix}P^i& I_n \\I_n & W^i\end{smallmatrix}\right]\succeq 0$ in \eqref{eq:setPi_bilinear}, we obtain a convex approximation of the set $\mathscr{P}^i_\epsilon$, which is denoted by $\bar{\mathscr{P}}^i_\epsilon$. Note that $(P^i, W^i) \in \partial \bar{\mathscr{P}}^i_\epsilon$ implies $P^iW^i = I_n$, from which it follows that $\mathscr{P}^i_\epsilon = \partial \bar{\mathscr{P}}^i_\epsilon$. Thus, the feasibility of $\mathscr{P}^i \neq \emptyset$ (as a result the non-emptiness of $\mathfrak{g}^i(\delta^i, F^{-i})$) can be verified by determining whether $\partial \bar{\mathscr{P}}^i_\epsilon \neq \emptyset$. To determine the feasibility of $\mathfrak{g}^i(\delta^i, F^{-i}) \neq \emptyset$ for a given $(\delta^i, F^{-i})$, we follow the procedure adopted in \cite[Algorithm~2]{Roy:2025guaranteed}, which is based on the Sequential Linear Programming Matrix Method (SLPMM); see also \cite{Leibfritz:01}.  The steps of the SLPMM algorithm are similar to \cite[Algorithm~2]{Roy:2025guaranteed}. The detailed steps of the SLPMM algorithm are omitted for brevity. Next, we present an iterative algorithm for computing an OF-GCE that satisfies the conditions \eqref{eq:GCEtest_outputfeedback} of Theorem \ref{thm:GCEtest_output_iff}. Algorithm \ref{alg:1_GCE_OF}, referred to as the sequential guaranteed cost response algorithm, is inspired by the sequential best response algorithm which is widely used in the game theory literature; see, for example, \cite{Tembine:2011book, Roy:22, Roy:2025guaranteed}. The algorithm is initialized with a stabilizing initial guess. For a fixed $F^i$, Step 2 verifies $\bar{\mathscr{P}}^i_{\epsilon} \neq \emptyset$ for each player $i$. The player index is then updated in Step 11, and the process is repeated until an OF-GCE is obtained. Prior to each update, Step 1 verifies whether the current feedback strategies satisfy the conditions in \eqref{eq:GCEtest_outputfeedback}. The algorithm does not stop immediately if a player fails to obtain a feasible feedback strategy in Step 2. Instead, the algorithm stops only when all players consecutively fail to update their strategies. It is possible that the algorithm may not terminate and can cycle forever, with  repeated updates of a subset of agents' strategies. 
{\begin{algorithm}
		\caption{Sequential guaranteed cost response for the synthesis of an OF-GCE} 
		\label{alg:1_GCE_OF}  
		\DontPrintSemicolon  
		\KwData{ given   $(\delta^i,\delta^{-i})$ and initial stabilizing guess $(F^i,F^{-i})$;} 
		\KwIn{$i=1$; \tcp*[r]{\small{player index}} }		 
		\KwIn{$j=0$; \tcp*[r]{\small{number of players who fail to satisfy Theorem \ref{thm:GCi_OF}  consecutively}}}
		\While{$(F^i,F^{-i})\notin \mathscr F^\circ$ (verified using Theorem \ref{thm:GCEtest_output_iff})}
		{
			\eIf {$\mathscr P^i_{\epsilon} \neq \emptyset$ exists satisfying Theorem \ref{thm:GCi_OF} given $F^{-i}$ \tcp*[r]{using \cite[Algorithm 2]{Roy:2025guaranteed}}}
			{ Update $F^i$ using \eqref{eq:FGCi} for a feasible $P^i\in \mathscr P^i_{\epsilon}$;\;
				$j=0$;
			}
			{$j=j+1$;\; 
				\If{$j\geq N$}{\KwResult{Stop, cannot find an OF-GCE;}}}
			$i=(i~\text{mod}~N)+1$; \tcp*[r]{\small{update player index}} }
		\KwResult{Found an OF-GCE;}
\end{algorithm}}
\begin{remark}  
Note that the design of an OF-GCE controllers is centralized but their implementation is distributed. The synthesis of OF-GCE stratgeies can be made independent of global initial state $x_0$ by assuming that $x_0\in  \{z\in \mathbb R^n~|~||z||_2 < r\}$, where $r>0$ is a scaling factor. The
conditions \eqref{eq:GCEoutput2} and \eqref{eq:LMI2_OF} to be replaced equivalently by the LMI in the variable $P^i \succ 0$ as $P^i \prec \frac{\delta^i}{r^2} I_n$, which eliminates the dependence on a specific initial condition; see \cite{Jiao:20TAC}. As a result, the LMI set \eqref{eq:YLMI} can be equivalently written as for player $i \in \mathsf{N}$
\begin{align}
    \mathscr Y^i
:=
\bigl \{
Y \in \mathbb R^{n\times n}
~\mid~
Y \succ 0,
Y - \tfrac{r^2}{\delta^i} I_n \succ 0,\;
\mathcal N_{\tilde{B}}^\top \Phi_2^i(Y)\,\mathcal N_{\bar{B}} \prec 0
\bigr \}. 
\end{align}
\label{rem:ind_init_cond}
\end{remark}
\begin{remark}
\label{rmk:init_stableSoln} Algorithm~\ref{alg:1_GCE_OF} require that the initial stabilizing feedback gains $(F^i, F^{-i})$. For a given $P \succ 0$, the feasibility of 
 $(A + \sum_{i=1}^N B^i F^i C^i )^\top P (A + \sum_{i=1}^N B^i F^i C^i ) - P \prec 0$, with respect to the variables $(F^{i}, F^{-i})$ provides an initial set of stabilizing feedback gains. Note that, for a given $P \succ 0$, the above matrix inequality is not linear in the variables $(F^i, F^{-i})$. Hence,  using the Schur complement, we obtain the following LMI condition 
 \begin{align}
     \begin{bmatrix}
         -P & (A + \sum_{i=1}^N B^i F^i C^i )^\top P \\
    P (A + \sum_{i=1}^N B^i F^i C^i ) & -P
     \end{bmatrix}
     \prec 0.
 \end{align}
\end{remark}
\begin{remark}
Algorithm \ref{alg:1_GCE_OF} depends on sequential guaranteed cost responses. 
It is closely related to best-response dynamics used to compute Nash equilibria. It is well known that best-response dynamics converge only for certain classes of games; see \cite{Tembine:2011book}. Similarly, no theoretical convergence guarantees for Algorithm \ref{alg:1_GCE_OF} are established within the scope of this note. Moreover, the output feedback case leads to a nonconvex problem, which introduces additional complexity. As discussed earlier, this motivates the use of the SLPMM algorithm in Step 2 of Algorithm \ref{alg:1_GCE_OF} to verify that $\mathscr{P}^i_\epsilon \neq \emptyset$. This difficulty is inherent to the design of static output feedback suboptimal controllers and has been discussed in the literature; see also discussions in \cite{Leibfritz:01, Iwasaki:94}. We also note that the selection of initial stabilizing feedback gains can significantly influence the convergence rate and performance of Algorithm \ref{alg:1_GCE_OF}. 
\end{remark}
\begin{remark} We note that the selection of the cost profile $\left(\delta^i, \delta^{-i} \right) \in \mathbb{R}^N_+$ plays a crucial role in the existence of an OF-GCE. In particular, the non-emptiness of a guaranteed cost of each player is not only depends on her own cost estimate $\delta^i$ but also on the cost estimates of the other players, $\delta^{-i}$; see Theorem \ref{thm:GCi_OF}. Using the monotonicity property of OF-GCE (see Theorem \ref{thm:LargeDelta}), a practical approach in Algorithm \ref{alg:1_GCE_OF} is to initialize the cost profile with relatively large values and then gradually decrease them until no OF-GCE can be found. 
\end{remark}
\section{Existence and Synthesis of State feedback guaranteed cost equilibrium}
\label{sec:SF_GCE}
In this section, we consider the case of complete state information, where $C^i = I_n,~\forall i \in \mathsf{N}$. Thus, Theorem~\ref{thm:GCEtest_output_iff} provides necessary and sufficient conditions for a strategy profile to constitute a stabilizing SF-GCE. However, synthesizing an SF-GCE requires that the guaranteed cost response set \eqref{eq:satisfactionfunction} be non-empty. The following result provides a necessary and sufficient condition for this set to be non-empty.
\begin{corollary}
\label{thm:GCi_SF}
Let $\delta^i>0$ and $F^{-i} \in \bigtimes_{j \in -i} \mathbb R^{m_j \times n} $ be given.    Consider the following set
	\begin{subequations} 
		\begin{align}
			\resizebox{0.98\columnwidth}{!}{$
\mathscr{Y}^i
:=
\Big\{
Y \in \mathbb{R}^{n\times n}
~\big|~
Y \succ 0,\;
Y - \frac{r^2}{\delta^i} I_n \succ 0,\;
\mathcal{N}_{\hat{B}^i}^\top
\Phi^i(Y)
\mathcal{N}_{\hat{B}^i}
\prec 0
\Big\}
$}
\label{eq:YLMI_SF}
           \end{align} 
        \end{subequations}
    \text{where,}
\begin{subequations}
    \begin{align}
			\Phi^i (Y)&:=\begin{bmatrix} 
                -Y  &Y {A^i}^\top &\left (\sqrt{Q^i}Y \right)^\top  & 0_{n\times  {m}_i }\\
                A^iY  &-Y &0_{n \times n} &0_{n \times m_{i}} \\
				\sqrt{Q^i}Y   &0_{n \times n} & -I_{n} &0_{ n \times m_i}\\
				0_{{m}_i\times n} &0_{{m}_i\times n} &0_{m_i \times n}&-(R^i)^{-1}
				 \end{bmatrix},
        \end{align}
        \end{subequations}
		and $A^i:=A+\sum_{j\in -i}B^j F^j$. The matrix $\mathcal N_{\hat{B}^i}:= \ker(\hat{B}^i)$ denote matrix with orthonormal columns which span the null space of the matrix $\hat{B}^i := \begin{bmatrix} 0_{m_{i} \times n} &{B_i}^\top & 0_{m_{i} \times n} &I_{m_i}\end{bmatrix}$. Define the set
		\begin{align}
			\mathscr P^i&:=\{P^i\in \mathbb R^{n\times n} ~|~P^i\succ 0,  ~(P^i)^{-1} \in \mathscr Y^i \}.\label{eq:setPi_SF}
		\end{align}
		Then, there exists an $ F^i \in \mathfrak{g}^i(\delta^i, F^{-i}) \neq \emptyset$, and  for any $F^i \in \mathfrak{g}^i(\delta^i, F^{-i}) $ we have $\sigma(A^i + B^i F^i C^i) \subset \mathbb{D}$ if and only if $ \mathscr{P}^i \neq \emptyset $. In this case, any $ F^i \in \mathfrak{g}^i(\delta^i, F^{-i}) $ satisfies the following LMI for some $ P^i \in \mathscr{P}^i $ 
	\begin{align}
\resizebox{0.98\columnwidth}{!}{$
\begin{bmatrix}
-P^i
&
\bigl(A^i + B^iF^i\bigr)^\top P^i
&
\bigl(\sqrt{Q^i}\bigr)^\top
&
\bigl(\sqrt{R^i}F^i\bigr)^\top
\\
P^i\bigl(A^i + B^iF^i\bigr)
&
-P^i
&
0_{n\times n}
&
0_{n\times m_i}
\\
\sqrt{Q^i}
&
0_{n\times n}
&
-I_n
&
0_{n\times m_i}
\\
\sqrt{R^i}F^i
&
0_{m_i\times n}
&
0_{m_i\times n}
&
-I_{m_i}
\end{bmatrix}
\prec 0.
$}
\label{eq:FGi_SF}
\end{align} 
\label{cor:FGi_SF}
\end{corollary}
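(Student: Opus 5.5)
We obtain Corollary \ref{cor:FGi_SF} by specializing Theorem \ref{thm:GCi_OF} to $C^i=I_n$ (so $s_i=n$) and checking that the $\mathscr{X}^i$ constraint becomes void. With $C^i=I_n$ we have $\tilde{C}^i=\begin{bmatrix} I_n & 0 & 0 & 0\end{bmatrix}$. Its null space is spanned by the orthonormal columns
\[
\mathcal{N}_{\tilde{C}^i}=\begin{bmatrix} 0 \\ I_{2n+m_i}\end{bmatrix}.
\]
The compression $\mathcal{N}_{\tilde{C}^i}^\top\Phi_1^i(X)\mathcal{N}_{\tilde{C}^i}$ is then the lower-right block $\mathrm{diag}(-X,-I_n,-(R^i)^{-1})$. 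This block is negative definite whenever $X\succ 0$, so $\mathscr{X}^i=\{X\succ 0\}$ and the condition $P^i\in\mathscr{X}^i$ in \eqref{eq:setPi} is implied by $P^i\succ 0$. Likewise $\tilde{B}^i$ becomes $\hat{B}^i$ and $\Phi_2^i$ becomes $\Phi^i$, since $\sqrt{Q^i}C^iY=\sqrt{Q^i}Y$. Hence \eqref{eq:setPi} reduces to \eqref{eq:setPi_SF}, apart from the cost constraint.

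For the cost constraint, the corollary uses $Y-\frac{r^2}{\delta^i}I_n\succ 0$ in place of the $x_0$-dependent LMI in \eqref{eq:YLMI}. We handle this with the argument of Remark \ref{rem:ind_init_cond}. Interpret the guaranteed cost as holding uniformly for $\|x_0\|_2<r$; in the closed-ball version the strict inequalities below still suffice, because the set is compact. Then $x_0^\top P^ix_0<\delta^i$ for all such $x_0$ is equivalent to $P^i\preceq\frac{\delta^i}{r^2}I_n$. In the strict form used by the paper this reads $P^i\prec \frac{\delta^i}{r^2}I_n$, and since inversion is order reversing on positive definite matrices this is $(P^i)^{-1}\succ\frac{r^2}{\delta^i}I_n$. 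In the proof of Theorem \ref{thm:GCi_OF}, the Schur-complement step \eqref{eq:LMI_ub2_OF} is therefore replaced by this equivalence, and the rest of the argument is untouched. In particular, the cost bound of Lemma \ref{lem:GCi_output_iff} is satisfied for every admissible initial state.

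With these reductions, both directions follow verbatim from Theorem \ref{thm:GCi_OF}. For ($\Rightarrow$), a stabilizing $F^i\in\mathfrak g^i$ yields a pair $(P^i,F^i)$ satisfying \eqref{eq:LMI1_OF}. We rewrite this as \eqref{eq:BMI_2} with $\tilde{C}^i=[I_n~0~0~0]$ and apply Lemma \ref{lem:Finsler}. After the congruence $I_n\oplus (P^i)^{-1}\oplus I_n\oplus I_{m_i}$, this gives $(P^i)^{-1}\in\mathscr{Y}^i$; the $\tilde{C}^i$ projection condition holds automatically. For ($\Leftarrow$), $P^i\in\mathscr{P}^i$ satisfies both projection conditions, the first trivially, so Lemma \ref{lem:Finsler} produces $F^i$ with \eqref{eq:BMI_2}. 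A Schur complement then gives \eqref{eq:FGi_SF}, which is \eqref{eq:FGCi} with $C^i=I_n$, and Lemma \ref{lem:GCi_output_iff} yields Schur stability and $J^i<\delta^i$.

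One technical point is that Lemma \ref{lem:Finsler} assumes $V$ has rank $s<n$ (here the ambient dimension is $3n+m_i$ and $V=\tilde{C}^i$ has rank $n<3n+m_i$), so the lemma applies. We expect the main obstacle to be the cost-bound reinterpretation just described, namely replacing the $x_0$-dependent LMI with the $r$-ball condition. This must be stated carefully as a uniform guarantee, and the strict versus non-strict boundary case must be handled; everything else is a direct substitution. Convexity of \eqref{eq:setPi_SF} in $Y=(P^i)^{-1}$ then follows because $\mathscr{Y}^i$ is defined by LMIs in $Y$ alone.
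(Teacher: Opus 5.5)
Your proposal is correct and is essentially the paper's own proof. The paper specializes Theorem \ref{thm:GCi_OF} to $C^i=I_n$ and replaces the $x_0$-dependent Schur-complement condition \eqref{eq:LMI_ub2_OF} by the ball condition of Remark \ref{rem:ind_init_cond}, exactly as you do, and you additionally make explicit two points the paper leaves implicit: that $\mathcal{N}_{\tilde{C}^i}^\top\Phi_1^i(X)\mathcal{N}_{\tilde{C}^i}=\mathrm{diag}(-X,-I_n,-(R^i)^{-1})$, so the $\mathscr{X}^i$ constraint drops out, and that the closed ball $\|x_0\|_2\le r$ is the reading under which the strict condition $Y\succ\frac{r^2}{\delta^i}I_n$ gives an exact equivalence.
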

\begin{proof}
The proof uses the result from Remark~\ref{rem:ind_init_cond} and follows the same steps as the proof of Theorem~\ref{thm:GCi_OF}.
\end{proof}
\begin{remark}
In \cite[Theorem 5.3]{Roy:2025guaranteed}, we established a sufficient condition for synthesizing SF-GCE strategies in continuous-time game using a change-of-variables approach. In contrast, in this work, we derive necessary and sufficient conditions in Corollary \ref{cor:FGi_SF} for the synthesis of SF-GCE strategies. Moreover, unlike previous approaches, the proposed state feedback result does not rely on a change-of-variables technique. Instead, a projection-lemma-based approach unifies both the state and output feedback cases, with the state feedback case arising as a special case of the output feedback case. 

\end{remark}
\begin{remark}
In the output feedback case, we observed that the set $\mathscr P^i$ in \eqref{eq:setPi} is non-convex. However, in the state feedback case, this set \eqref{eq:setPi_SF} is convex as the set is characterized by the convex set $\mathscr{Y}^i$ in \eqref{eq:YLMI_SF}. As a result, the SF-GCE strategies can be synthesized using LMIs without the need for SDP relaxation techniques used in Section~\ref{sec:algorithm_OF}. Note that, for computational purposes, we prefer closed sets. Hence, we consider the closed ``$\epsilon$-approximation" of the convex set \eqref{eq:YLMI_SF}. Finally, we use the sequential guaranteed cost response approach as presented in Algorithm~\ref{alg:1_GCE_OF} to compute an SF-GCE. We note that Algorithm~\ref{alg:1_GCE_OF} starts with initial stabilizing feedback gains obtained using the procedure discussed in Remark~\ref{rem:ind_init_cond}, where $ C^i = I_n, \forall i \in \mathsf{N}$. 
\end{remark}
\begin{remark}
As discussed in Section \ref{sec:preliminaries}, computing a feedback Nash equilibrium for LQ differential games involves solving a set of coupled algebraic equations that are quadratic in the decision variables, whereas for LQ difference games, the corresponding equations are non-quadratic. By contrast, computing a feedback guaranteed equilibrium (in both differential and difference game settings) is related to a set of BMIs. However, these can be reformulated as LMIs in the decision variables (see Theorem \ref{thm:GCi_OF} and Corollary \ref{cor:FGi_SF}), which makes the problem computationally tractable. 
\end{remark}
\section{Numerical illustrations}
\label{sec:numerical_results}
In this section, we demonstrate the performance of SF-GCE and OF-GCE through numerical examples. 
\begin{example}
\label{ex:scalar-2ply}
We consider a $2$-player infinite-horizon LQ difference game with complete state observation. The state dynamics is given by $x_{k+1}  = Ax_k + \sum_{i=1}^2 B^i u^i_k, x_0 \in \mathbb{R}$ where $x_k \in \mathbb{R}$ and $u^i_k \in \mathbb{R}$. Each player $i$ has the cost functional $J^i = \sum_{k=0}^{\infty} (Q^i x_k^2 + R^i {u_k^i}^2)$, $i=1,2$. The game parameters are $A =2.1$, $B^1 = 2$, $B^2=1$, $Q^1 = 0.45$, $Q^2 = 0.25$, $R^1 = 5$, and $R^2 = 0.65$, with initial condition $x_0 = 0.35$. For this game, we obtain three stabilizing SF-NE. The corresponding SF-NE costs are presented in Table \ref{tab:NE_PoS}. We obtain the cooperative cost by solving \eqref{eq:tgcostcoop} as $J_{\mathrm{Co}} = 0.2804$. To determine the SF-GCE region, we consider the parameter region $(\delta^1, \delta^2) \in [0, 0.5] \times [0, 0.5]$ and uniformly  sample $350$ points within this region. For each sampled $(\delta^1, \delta^2)$, we find out the existence of an SF-GCE using Algorithm~\ref{alg:1_GCE_OF}. The shaded region in Fig. \ref{fig:gce_existence} indicates the parameter values for which SF-GCE exist. Furthermore, the shaded regions in Fig. \ref{fig:cost_comp} show the SF-GCE costs of the players. Note that the SF-GCE cost region under non-cooperative behavior lies above the Pareto frontier, that is, the costs achievable by the players in cooperation. Clearly, there exists an SF-GCE that can support a cooperative outcome. Moreover, Fig.~\ref{fig:cost_comp} shows that the SF-NE costs associated with the three SF-NE lie in the interior of the cost set generated by SF-GCE, which verifies Theorem~\ref{thm:GCE_Nash}. Fig.~\ref{fig:pos_region} shows that the PoS associated with SF-GCE satisfies the upper bound \eqref{eq:PoS1}. Fig.~\ref{fig:pos_magnified_NearNash} is a magnified version of Fig.~\ref{fig:pos_region}, which shows that the PoS values correspond to each SF-NE; see also Table \ref{tab:NE_PoS}. From Fig.~\ref{fig:pos_magnified_NearNash}, we observe that there are many SF-GCE at which PoS values are close to $1$, while several others achieve PoS values strictly lower than the best PoS value of $1.1254$ at an SF-NE; see also Fig.~\ref{fig:pos_magnified_Near1}, the magnified version of Fig.~\ref{fig:pos_magnified_NearNash}. The minimum PoS obtained for this problem is $1.0002$. 
\begin{table}[t]
\centering
\caption{PoS at each SF-NE in Example \ref{ex:scalar-2ply}}
\label{tab:NE_PoS}
\begin{tabular}{c|c|c|c}
  & SF-NE &Cost at SF-NE &PoS \\ [.5ex]
\hline
$\mathrm{NE}_1$ &$(0.6859, 1.8901) $ 
& $(0.0840, 0.2315)$ 
& $ 1.1254$ \\

$\mathrm{NE}_2$ &$(3.4306, 0.3998)$ 
& $(0.4202, 0.0490)$ 
& $1.6734$ \\

$\mathrm{NE}_3$ &$(1.8463, 0.8603)$ 
& $(0.2262, 0.1054)$ 
& $1.1824$ \\
\hline
\end{tabular}
\end{table}

\begin{figure}
    \centering
    \begin{subfigure}{0.48\linewidth}
        \centering
  \includegraphics[width=\linewidth, height = 0.825\linewidth]{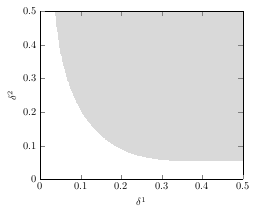}
        \caption{}
        \label{fig:gce_existence}
    \end{subfigure}
    \hspace{1pt}
    \begin{subfigure}{0.48\linewidth}
        \centering
     \includegraphics[width=\linewidth, height = 0.825\linewidth]{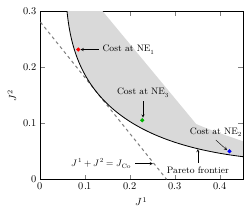}
        \caption{}
        \label{fig:cost_comp}
    \end{subfigure}
   \vspace{8pt}
    \begin{subfigure}{0.48\linewidth}
        \centering
        \includegraphics[width=\linewidth, height = 0.825\linewidth]{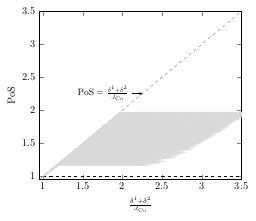}
        \caption{}
        \label{fig:pos_region}
    \end{subfigure}
    \hspace{1pt}
    \begin{subfigure}{0.48\linewidth}
        \centering
        \includegraphics[width=\linewidth, height = 0.825\linewidth]{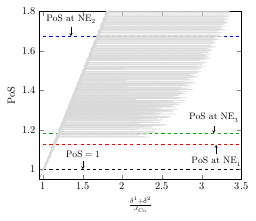}
        \caption{}
        \label{fig:pos_magnified_NearNash}
    \end{subfigure}
    \vspace{8pt}
    \begin{subfigure}{0.48\linewidth}
        \centering
        \includegraphics[width=\linewidth, height = 0.825\linewidth]{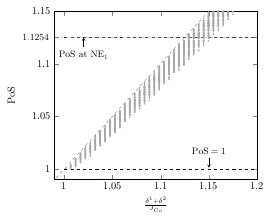}
        \caption{}
        \label{fig:pos_magnified_Near1}
    \end{subfigure}
\caption{Results for Example~\ref{ex:scalar-2ply}: SF-GCE existence region (panel (a)), SF-NE costs in the interior of the SF-GCE cost region (panel (b)), PoS at SF-GCE region (panel (c)), magnified view of panel (c) (panel (d)), and magnified view of panel (d) where PoS $\approx 1$ (panel (e)).}
\end{figure}
\end{example}
\begin{example}
\label{ex:5ply_heteregeneous}
 We consider an output-consensus problem for a heterogeneous $5$-agent system that communicates over a directed graph; see Fig.~\ref{fig:5agentnetwork_OF_GCE}.
 \begin{figure}[H] 
    \centering 
    \begin{tikzpicture}[scale=.26,>=latex', inner sep=1mm, font=\small]
        \tikzstyle{solid node1}=[circle,auto=center,draw,minimum size=7pt,inner sep=2,fill=blue!25]
        \tikzstyle{solid node2}=[circle,auto=center,draw,minimum size=7pt,inner sep=2,fill=red!20]
        \tikzstyle{dedge} = [draw, ->, black!60, line width=.25mm] 
        \def \x{2.5};
        \def \y{4};
        
        \node (n1)[solid node1] at (6,\y) {\scriptsize{$1$}};
        \node (n2)[solid node2] at (6,-\y) {\scriptsize{$2$}}; 
        \node (n3)[solid node1] at (12,\y) {\scriptsize{$3$}};
        \node (n4)[solid node2] at (12,-\y) {\scriptsize{$4$}};
        \node (n5)[solid node1] at (18,0) {\scriptsize{$5$}};   
        \path[dedge] (n1) -- (n2);
        \path[dedge] (n2) -- (n1);	 
        \path[dedge] (n1) -- (n3);	 
        \path[dedge] (n5) -- (n4); 	 
        \path[dedge] (n3) -- (n5);	 
        \path[dedge] (n4) -- (n3);	
        \path[dedge] (n2) -- (n3);	
        \path[dedge] (n5) -- (n1);	

        \draw[fill=blue!25](22,0) circle[radius=0.75];
        \draw[fill=red!20](22,-1.8) circle[radius=0.75];
        \draw(22,0)node[right,xshift=5]{Agent with $x_i \in \mathbb{R}^2$};
        \draw(22,-1.8)node[right,xshift=5]{Agent with $x_i \in \mathbb{R}^3$};
    \end{tikzpicture}
    \caption{Directed communication graph of the 5-agent heterogeneous multi-agent system in Example~\ref{ex:5ply_heteregeneous}.}
    \label{fig:5agentnetwork_OF_GCE}
\end{figure}
This problem has been studied in the continuous-time game framework in \cite[Example 7.3]{Roy:2025guaranteed}. The agent dynamics are discretized with a sampling time $ T_s = 0.1$. The local information, objective functional of each agent, remains the same as in \cite[Example 7.3]{Roy:2025guaranteed}. Consequently, the weighting matrices $Q^i$ and $R^i$, $i \in \mathsf{N}$, are unchanged. This example illustrates the performance of an OF-GCE in discrete-time games with heterogeneous, networked agents. For simulation purpose, we choose $x_0=[-0.3 ~0.5 ~0.4 ~0.2 ~0.6 ~-0.3 ~0.2 ~-0.1 ~0.5 ~0.7 ~0.2 ~-0.4]^\top$. By following the procedure discussed in Remark \ref{rmk:init_stableSoln}, we consider the following set of initial stabilizing feedback strategies. 
\begin{align*}
		&F^1 = 
		\left[\begin{smallmatrix}
			 15.6163  &-10.8561  & -1.0446   &-0.5333   & 6.3086   &-6.3297 \\
  -12.9511    &5.3031    &0.6990    &1.3441   &-6.2039    &5.9953
		\end{smallmatrix} \right],  
		\\
       &F^2= 
		\left[\begin{smallmatrix}
			 0.9515   &-0.6758   &-1.4371   &-0.3797 \\
             -4.6275   &3.6151   &-0.7442   &-7.2380
		\end{smallmatrix} \right], 
		\\
		&F^3 = 
		\left[\begin{smallmatrix}
			 0.8763  & -0.4784   &-0.0868    &0.8174   &13.2609   &-9.8851   &-0.7422    &0.4551 \\
   -0.2105   &-0.1562    &0.2125   &-1.4365   &-6.8430    &1.6554    &0.7042   &-1.5720
		\end{smallmatrix} \right], \\
		&F^4 = 
		\left[\begin{smallmatrix}
			-1.6056   &-0.1640   &-0.1783    &0.1198 \\
    0.5590   &-3.2441    &5.7167   &-5.0583
		\end{smallmatrix} \right],  \\
		&F^5  = 
		\left[\begin{smallmatrix}
			 -2.4364  &2.5555   &16.1967  &-11.1420 \\
    2.9683   &-2.0417  &-15.9418  &8.7156
		\end{smallmatrix} \right].
	\end{align*}	
	Using Algorithm \ref{alg:1_GCE_OF}, we obtain the following set of OF-GCE strategies. 
	\begin{align*}
		&F^{1\,\circ} = 
		\left[\begin{smallmatrix}
		3.3701   &-3.3138   &-0.0230   &0.1387   &-0.3159    &0.4166 \\
      -2.6807   &-0.0558    &0.0167   &-0.0530   &-0.2933    &0.2913
		\end{smallmatrix} \right], \\
		&F^{2\,\circ} = 
		\left[\begin{smallmatrix}
			 0.9602   &-0.4829   &-1.4835   &-0.1372 \\
              -0.5816  &0.6657    &0.2650   &-1.4032
		\end{smallmatrix} \right], \\
		&F^{3\,\circ} = 
		 \left[\begin{smallmatrix}
			-0.0370    &0.1792   &-0.0468    &0.4188    &3.1238   &-3.1030   &-0.0515    &0.3128 \\
    0.1542   &-0.0617    &0.0261    &0.1973   &-2.7254   &-0.0962    &0.0182    &0.0914
		\end{smallmatrix} \right], \\
		&F^{4\,\circ} = 
		\left[\begin{smallmatrix}
		-1.3222   &-0.0871    &0.4642   &-0.1681 \\
    0.1180   &-1.1506   &-0.6568    &0.8678 
		\end{smallmatrix} \right], \\
		 &F^{5\,\circ}= 
		\left[\begin{smallmatrix}
			0.3079    &0.0538    &2.2063   &-2.1547 \\
                0.4415    &-0.2566   &-3.7581   &0.6579
		\end{smallmatrix} \right].
	\end{align*}
    In Table \ref{tab:OFGCE_5agent}, we demonstrate that at an OF-GCE, the costs of all the agents lie below than a given cost profile. Next, we compute the total game cost in cooperation using \eqref{eq:tgcostcoop}, and obtain $ J_{\mathrm{Co}} = 21.7374$. We then calculate the associated PoS for the OF-GCE. We obtain $\mathrm{PoS} = \frac{\sum_{i=1}^5 J^{i\,\circ}}{J_{\mathrm{Co}}} = 1.2181 < \frac{\sum_{i=1}^{5} \delta^i }{J_{\mathrm{Co}}} = 2.6912$, which verifies Theorem \ref{thm:PoS}.  In Figure \ref{fig:Case1magn_traj_OF}, we observe that the agents' error trajectories converge asymptotically to zero.
    \begin{table}
	\centering  
	\caption{Individual costs and PoS for the OF-GCE strategies for Example~\ref{ex:5ply_heteregeneous}}
	\begin{tabular}{ p{0.30cm}|p{0.65cm}|p{0.65cm}|p{0.65cm}|p{0.65cm}|p{0.65cm}|p{1.10cm}|p{0.65cm}}
		 & $1$ &$2$ &$3$ &$4$ &$5$ &total cost &$\mathrm{PoS}$ \\ [.5ex]
       \hline  
		$\delta^i$   &$13.5$  &$9$  &$13.5$ &$9$  &$13.5$   &$58.5$  &$2.6912$ \\ 
        $J^{i\,\circ}$  &$9.6630$  &$1.8385$   &$ 3.5883$ &$5.3421$  &$6.0485$ &$26.4803$ &$1.2182$ \\ [.5ex]
        \hline 
	\end{tabular}
	\label{tab:OFGCE_5agent}
\end{table}
    \begin{figure}
    \centering 
		\subfloat[$e^i(1)$ vs $k$]
		{\input{FigData/ei_1_OFGCE_ex4} \label{fig:ei_1_OFGCE_ex4}}~~
		\subfloat[$e^i(2)$ vs $k$]{\input{FigData/ei_2_OFGCE_ex4}
			\label{fig:ei_2_OFGCE_ex4}}     
		\caption{Panel (a) and Panel (b) illustrate the error trajectories of the agents using OF-GCE strategies for Example~\ref{ex:5ply_heteregeneous}} 
		\label{fig:Case1magn_traj_OF}
	\end{figure}
\end{example}
\section{Conclusions}
\label{sec:concl}
In this note, we studied the solution concept output feedback guaranteed cost equilibrium (OF-GCE) for infinite-horizon linear-quadratic deterministic difference games with an output feedback information structure. At an OFGCE, each player's cost is upper-bounded by a given threshold. We provide necessary and sufficient conditions for the existence of these equilibria. Additionally, we also present a linear matrix inequality-based iterative algorithm for synthesizing them. We have also shown that state feedback GCE (SF-GCE) can be obtained as a special case of an OF-GCE when players have the complete state information. Future work will focus on establishing theoretical convergence guarantees for the proposed iterative algorithm. Further, we will analyze the GCE concept for discrete-time scalar LQ games to gain analytical insight and extend the GCE framework to stochastic dynamic games. 
\vspace{-2pt}
\bibliographystyle{IEEEtran}
\bibliography{GCEreference}

\end{document}

%% file: FigData/ei_1_OFGCE_ex4.tex
\begin{tikzpicture}[scale=.475,>=latex']
	\tikzset{every pin/.append style={font=\small}}
	\begin{axis}[xmin=0,xmax=40,ymin=-1.2, ymax=1, xlabel = {$k$ (time steps)},ylabel={$e^{i}(1)$},legend pos = south east,
		grid=both, legend style={nodes={scale=0.9}}, 
		grid style={line width=.5pt, draw=gray!25},] 
		\addplot[color=black!85, dashed, line width=2pt] table{FigData/e1_1_ex4.dat}; \addlegendentry{Agent 1: $e^{1}(1)$};
		\addplot[color=blue!85, dashed, line width=2pt] table{FigData/e2_1_ex4.dat}; \addlegendentry{Agent 2: $e^{2}(1)$};
		\addplot[color=green!85!black, dashed, line width=2pt] table{FigData/e3_1_ex4.dat}; \addlegendentry{Agent 3: $e^{3}(1)$};
		\addplot[color=brown!85!black, dashed, line width=2pt] table{FigData/e4_1_ex4.dat}; \addlegendentry{Agent 4: $e^{4}(1)$}
		\addplot[color=red!85, dashed, line width=2pt] table{FigData/e5_1_ex4.dat}; \addlegendentry{Agent 5: $e^{5}(1)$}
	\end{axis}  
\end{tikzpicture} 

%% file: FigData/ei_2_OFGCE_ex4.tex
\begin{tikzpicture}[scale=.475,>=latex']
	\tikzset{every pin/.append style={font=\small}}
	\begin{axis}[xmin=0,xmax=40,ymin=-1, ymax=1.25, xlabel = {$k$ (time steps)},ylabel={$e^{i}(2)$},legend pos = south east,
		grid=both, legend style={nodes={scale=0.9}}, 
		grid style={line width=.5pt, draw=gray!25},] 
		\addplot[color=black!85, dashed, line width=2pt] table{FigData/e1_2_ex4.dat}; \addlegendentry{Agent 1: $e^{1}(2)$};
		\addplot[color=blue!85, dashed, line width=2pt] table{FigData/e2_2_ex4.dat}; \addlegendentry{Agent 2: $e^{2}(2)$};
		\addplot[color=green!85!black, dashed, line width=2pt] table{FigData/e3_2_ex4.dat}; \addlegendentry{Agent 3: $e^{3}(2)$};
		\addplot[color=brown!85!black, dashed, line width=2pt] table{FigData/e4_2_ex4.dat}; \addlegendentry{Agent 4: $e^{4}(2)$}
		\addplot[color=red!85, dashed, line width=2pt] table{FigData/e5_2_ex4.dat}; \addlegendentry{Agent 5: $e^{5}(2)$}
	\end{axis}  
\end{tikzpicture} 